\newtheorem{theorem}{Theorem}[section]
\newtheorem{proposition}[theorem]{Proposition}
\newtheorem{lemma}[theorem]{Lemma}
\newtheorem{definition}[theorem]{Definition}
\newtheorem{remark}[theorem]{Remark}
\newcommand{\Pmk}{\mathcal{P}^-_{k}}
\newcommand{\Ppk}{\mathcal{P}^+_{k}}
\newcommand{\Ppmk}{\mathcal{P}^{\pm}_{k}}
\newcommand{\Ppo}{\mathcal{P}^+_{1}}
\newcommand{\R}{\mathbb R}
\newcommand{\RN}{\mathbb R^N}
\newcommand{\SN}{\mathcal{S}^N}
\newcommand{\CR}{\mathcal{C}_R}
\newcommand{\tr}{{\rm Tr}}
\title{On positive solutions of fully nonlinear degenerate Lane-Emden type equations}
\author{Giulio Galise\footnote{Dip. di Matematica \lq\lq G. Castelnuovo\rq\rq, Sapienza Universit\`a di Roma,  P.le Aldo Moro 2, I–00185 Roma, Italy \texttt{galise@mat.uniroma1.it}
}}
\date{}
\begin{document}

\maketitle

\begin{abstract}
\noindent
We prove existence and uniqueness results of positive viscosity solutions of fully nonlinear degenerate elliptic equations with power-like zero order perturbations in bounded domains.  The principal part of such equations is either $\Pmk(D^2u)$ or  $\Ppk(D^2u)$,  some sort of \lq\lq truncated Laplacians\rq\rq,  given respectively by the smallest and the largest partial sum of  $k$ eigenvalues of the Hessian matrix. New phenomena with respect to the semilinear case  occur. Moreover, for $\Pmk$, we explicitely find the critical exponent $p$ of the  power nonlinearity that separates the  existence and nonexistence range of nontrivial solutions with zero Dirichlet boundary condition.
\end{abstract}

\vspace{1cm}

\noindent
\textbf{MSC 2010:} 35B09, 35B51, 35D40, 35J70

\medskip
\noindent
\textbf{Keywords:} Fully nonlinear degenerate elliptic operators, nonproper sub/superlinear equations,  critical exponents, comparison principle, viscosity solutions. 

\section{Introduction and main results}
The aim of this paper is to investigate the existence and uniqueness  of nontrivial solutions of  some nonproper fully nonlinear very degenerate elliptic equations of second order. More specifically we are interested in nonnegative solutions of 
\begin{equation}\label{eqbase-}
\Pmk(D^2u(x))+u^p(x)=0\quad\text{in $\Omega$}
\end{equation}
or of
\begin{equation}\label{eqbase+}
\Ppk(D^2u(x))+u^p(x)=0\quad\text{in $\Omega$},
\end{equation}
subject to the  Dirichlet boundary condition $u=0$ on $\partial\Omega$. From now on $\Omega$ will be a  bounded domain  of $\RN$, $N\geq2$ and $p>0$. In \eqref{eqbase-}-\eqref{eqbase+}  the operators $\Ppmk$ are defined for $C^2$-functions by the partial sums
\begin{equation}
\Pmk(D^2u)=\sum_{i=1}^k\lambda_i(D^2u)\qquad\text{and}\qquad\Ppk(D^2u)=\sum_{i=N-k+1}^N\lambda_i(D^2u)
\end{equation}
of the ordered eigenvalues $\lambda_1(D^2u)\leq\ldots\leq\lambda_N(D^2u)$ of the Hessian matrix $D^2u$. They  reduce to the standard Laplacian $\Delta$ if $k=N$. 
 If $u$ is merely a continuous function the previous definitions have to be understood in the weak viscosity sense, see e.g. \cite{CC,CIL,IL}.

\smallskip
The interest for such operators originated in geometric framework since the works of Sha \cite{Sha1,Sha2} and Wu \cite{Wu},  dealing with compact Riemannian manifolds which are $k$-convex, i.e.  
if the partial sum of the principal curvature functions
$$
\lambda_1+\ldots+\lambda_k
$$
is positive. More recent results in $k$-convex geometry can be found in Harvey and Lawson \cite{HL2}.\\
The operators $\Ppmk$ also arise  in the papers of  Ambrosio and Soner \cite{AS}, where the authors developed a level  set  approach for  the  mean  curvature  evolution  of  surfaces with  arbitrary   co-dimension, as well as in Oberman and Silvestre \cite{OS} concerning the underlying Dirichlet problem satisfied by the convex envelope of a prescribed boundary function. In the PDE context, we wish to mention the works of Caffarelli, Li and Nirenberg \cite{CLN,CLN2} which deal with maximum principle and symmetry type results for singular solutions, and of Harvey and Lawson \cite{HL} about existence and uniqueness of solutions of more general pure second order equations of the form $F(D^2u)=0$. Further developments can be found in \cite{CDLV,CPa,Vitolo,AGV,GV}.	
In the recent paper \cite{BGI}, the case $p=1$ is considered, leading to the eigenvalue problem for $\Ppmk$. There the authors extend  the notion of generalized principal eigenvalue to such operators, in the spirit of the acclaimed work of Berestycki-Nirenberg-Varadhan \cite{BNV}, shedding light on some very unusual phenomena due to the high degeneracy of $\Ppmk$. For instance, the failure of the strong minimum principle for $\Pmk$ (resp. the strong maximum principle for $\Ppk$) and of the Harnack inequality shows that  $\Ppmk$ are significant perturbations of the Laplacian.  In this context, the assumption $u\geq0$ in \eqref{eqbase-} is far to be equivalent to $u>0$. 

\smallskip
Let us consider the Dirichlet problem
\begin{equation}\label{Dirichlet}
\left\{
\begin{array}{rl}
u>0,\quad\Pmk(D^2u)+u^p=0 & \text{in $\Omega$}\\
u=0 &  \text{on $\partial\Omega$}.
\end{array}\right.
\end{equation}
The main result of this paper is the following
\begin{theorem}\label{existence}
Let $\Omega$ be a bounded domain of $\RN$ and let $k<N$ be a positive integer. 
\begin{itemize}
	\item[(1)] If $p\geq1$ then there are no subsolutions of \eqref{Dirichlet}.
	\item[(2)] If $p\in(0,1)$ and  $\Omega\in\CR$, then there exists a unique solution $U\in C(\overline\Omega)$ of \eqref{Dirichlet}.\\ Moreover for every $x_0\in\partial\Omega$ and any $q<\frac{1}{1-p}$ one has
\begin{equation}\label{hopf}
\lim_{{x\to x_0}}\frac{U(x)-U(x_0)}{|x-x_0|^q}=0.
\end{equation}
\end{itemize} 
\end{theorem}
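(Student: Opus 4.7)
My plan for part~(1) is to derive a contradiction by comparing any putative positive subsolution $u$ with the explicit barrier family $\phi_a(x)=(R^2-|x-x_0|^2)^a$ on a ball $B_R(x_0)\supset\overline{\Omega}$. The crucial point is that since $k<N$, for $a\geq 1$ the $N-1\geq k$ tangential eigenvalues $-2a(R^2-|x-x_0|^2)^{a-1}$ of $D^2\phi_a$ are the smallest, so a direct radial calculation gives $\Pmk(D^2\phi_a)=-2ak(R^2-|x-x_0|^2)^{a-1}$, and in particular $\Pmk(D^2\phi_a)+(2ak/R^2)\phi_a\leq 0$ in $B_R(x_0)$. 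The ratio $\phi_a/u$ blows up at $\partial\Omega$ (where $u$ vanishes) and attains its positive infimum $\lambda$ at some interior point $x_*\in\Omega$; using $\phi_a/\lambda$ as a smooth upper test function, the viscosity subsolution inequality for $u$ yields $\Pmk(D^2\phi_a(x_*))\geq -\phi_a(x_*)\,u(x_*)^{p-1}$, which combined with the supersolution estimate for $\phi_a$ forces $2ak/R^2\leq u(x_*)^{p-1}\leq(\sup_\Omega u)^{p-1}$. Since the right-hand side is finite for $p\geq 1$, the contradiction follows by letting $a\to\infty$.

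For part~(2), I would couple Perron's method with an explicit super/subsolution pair built from the same radial ansatz. As a supersolution, take $\bar u(x)=A(R^2-|x-x_0|^2)^{1/(1-p)}$ on a bounding ball $B_R(x_0)\supset\Omega$: the exponent $\gamma=1/(1-p)$ exactly cancels the powers of $(R^2-|x-x_0|^2)$ arising in $\Pmk(D^2\bar u)$ and $\bar u^p$, so that for $A$ large enough $\Pmk(D^2\bar u)+\bar u^p\leq 0$ in $B_R(x_0)$. For local subsolutions, the same ansatz $\underline u_{y}(x)=c(\rho^2-|x-y|^2)_+^{1/(1-p)}$ supported in a ball $B_\rho(y)\subset\Omega$ is a (compactly supported) viscosity subsolution for $c$ sufficiently small, and vanishes on $\partial\Omega$. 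Perron's envelope over subsolutions lying between $0$ and $\bar u$ then produces a viscosity solution $U$; the local $\underline u_y$ slide underneath it to force $U>0$ throughout $\Omega$, and the hypothesis $\Omega\in\CR$ supplies the exterior regularity needed to construct boundary barriers ensuring $U\in C(\overline\Omega)$ with $U|_{\partial\Omega}=0$.

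Uniqueness follows by a Brezis--Oswald-type argument: for two solutions $u_1,u_2$, the sublinearity $p<1$ makes $\lambda u_1$ a strict supersolution for every $\lambda>1$, since $\Pmk(D^2(\lambda u_1))+(\lambda u_1)^p=(\lambda^p-\lambda)u_1^p<0$; the comparison principle yields $u_2\leq\lambda u_1$, and letting $\lambda\to 1^+$ and swapping the roles of $u_1,u_2$ gives $u_1\equiv u_2$. Estimate \eqref{hopf} is extracted by comparing $U$ near each $x_0\in\partial\Omega$ with a local exterior-ball supersolution, essentially of the form $A(|x-y_0|^2-r_0^2)^\gamma$ with $\gamma<1/(1-p)$ taken arbitrarily close to this value. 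The main obstacles I anticipate are (a)~turning the upper-contact argument in part~(1) into a clean viscosity step, in particular ensuring $\inf\phi_a/u$ is attained at an interior point where $\phi_a/\lambda$ is an admissible test function, and (b)~setting up a comparison principle for $\Pmk$ coupled with the non-Lipschitz nonlinearity $u^p$ ($p<1$), which is the technical engine behind uniqueness, positivity of $U$, and the boundary estimate; this is precisely where the degeneracy of $\Pmk$ together with the class $\CR$ should intervene.
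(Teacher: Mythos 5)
Your proof of part~(1) is correct and takes a genuinely different route from the paper's. You run a direct sliding argument: since the degenerate operator $\Pmk$ picks only the $k\leq N-1$ tangential eigenvalues of a radially decreasing convex profile, $\phi_a=(R^2-|x-x_0|^2)^a$ satisfies $\Pmk(D^2\phi_a)+\tfrac{2ak}{R^2}\phi_a\leq 0$, and using $\phi_a/\lambda$ (with $\lambda=\inf_\Omega\phi_a/u>0$, attained at an interior point because $u\to 0$ at $\partial\Omega$ while $\phi_a$ stays bounded away from zero) as an upper test function for the subsolution $u$ forces $2ak/R^2\leq(\sup u)^{p-1}$, contradicted by $a\to\infty$ when $p\geq 1$. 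This is clean and self-contained. The paper instead first establishes a quantitative a priori bound $\sup u\leq(\tfrac{R^2}{2k}(1-p))^{1/(1-p)}$ for $p\in(0,1)$ via the comparison principle, and then handles $p\geq 1$ by rescaling a putative nontrivial subsolution and sending an auxiliary exponent $q\to 1^-$, exploiting that $(1-q)^{1/(1-q)}\to 0$. Your argument is more elementary (no comparison principle needed for part~(1)); the paper's route has the side benefit of producing the precise a priori bound, which is reused elsewhere.

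For part~(2), your overall Perron strategy, the explicit subsolution family $c(\rho^2-|x-y|^2)_+^{1/(1-p)}$, and the circumscribing-ball supersolution match the paper's. You correctly identify the comparison principle between a subsolution and a \emph{strictly positive} supersolution of $\Pmk(D^2u)+u^p=0$ as the essential technical engine, but you leave it as an acknowledged gap. This is the heart of the matter: since $s\mapsto s^p$ is increasing, the equation is nonproper and comparison fails in general (indeed the compactly supported bump \eqref{funz u} shows multiple solutions exist without the positivity restriction). The paper's key idea, which you would still need, is the Brezis--Kamin change of variable $U=\tfrac{1}{1-p}u^{1-p}$ (Lemma~\ref{lemma}), which converts the equation into one where the required sign can be extracted via a $(1+\varepsilon)$-perturbation and standard doubling of variables (Theorem~\ref{CP}). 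Your Brezis--Oswald scaling trick for uniqueness is fine but redundant once that comparison principle is in hand, since uniqueness then follows immediately from two applications of it.

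There is a concrete error in your anti-Hopf barrier. You propose comparing $U$ near $x_0\in\partial\Omega$ with $A(|x-y_0|^2-r_0^2)^\gamma$, $\gamma<\tfrac{1}{1-p}$, calling it an ``exterior-ball supersolution.'' But a radially \emph{increasing} convex profile $g(r)=A(r^2-r_0^2)^\gamma$ has all eigenvalues of $D^2w$ positive, so $\Pmk(D^2w)=k\,g'(r)/r>0$ and hence $\Pmk(D^2w)+w^p>0$: it is a (strict) \emph{subsolution}, not a supersolution, and cannot serve as an upper barrier. This is exactly where the degeneracy of $\Pmk$ bites, and it is why the paper uses the class $\CR$: one picks $y_0$ with $\Omega\subset B_R(y_0)$, $x_0\in\partial B_R(y_0)$, and compares $U$ with the radially \emph{decreasing} solution $v_{y_0}(x)=\bigl[\tfrac{1-p}{2k}(R^2-|x-y_0|^2)\bigr]^{1/(1-p)}$, which vanishes at $x_0$ to order exactly $\tfrac{1}{1-p}$; the estimate \eqref{hopf} for $q<\tfrac{1}{1-p}$ then drops out directly, with no need for a sub-$1/(1-p)$ exponent $\gamma$.
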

\noindent
The conclusions of Theorem \ref{existence} are rather unusual and some comments are in order.

\smallskip
\noindent
The value $p=1$ sharply characterizes the range of $p$ for the existence and nonexistence of positive (sub)solutions of \eqref{Dirichlet}, so  playing the role of \lq\lq critical exponent\rq\rq\, for $\Pmk$, at least in the class $\CR$ of uniformly convex domains, see Definition \ref{convexity}.\\
It is quite surprising that the critical character of $p=1$  is exactly the same both for solutions and for subsolutions. Moreover it does not depend on $k<N$. This striking feature has also a dual counterpart in $\RN$: 
$p=1$ is a threshold  for the existence of positive (super)solutions of \eqref{eqbase-} in the whole $\RN$, as proved in \cite{BGL}.\\
Furthermore it appears there is no relationship between the standard Sobolev critical exponent $p^\star=\frac{N+2}{N-2}$ of the Laplacian $\Delta\equiv{\mathcal P}^-_N$ and that of $\Pmk$.\\
A further effect of the strong degeneracy of $\Pmk$ is expressed by \eqref{hopf} as a sort of \lq\lq anti-Hopf lemma\rq\rq. This is a consequence of the fact that $\Pmk$ is a first order operator, 
$$
\Pmk(D^2u)=k\frac{u'(r)}{r},
$$
 when acting on convex and radially decreasing functions $u=u(|x|)$,  namely $u'(r)\leq0$, $u''(r)\geq0$.

\medskip
In view of Theorem \ref{existence} it seems natural  to analyze the asymptotic profile for $p\to1$ of the solution $U=U_p$  of \eqref{Dirichlet}. Using only comparison arguments, we shall prove that 
$$
M_p:=\left\|U_p\right\|_{L^\infty(\Omega)}\to0\qquad\text{for $p\to1$}
$$
and, if $\Omega$ is the unit ball centered at the origin, that the rescaled functions
$$
\tilde U_p(x)=\frac{1}{M_p}U_p\left(M_p^{\frac{1-p}{2}}x\right)$$
  converge, as $p\to1$, to a radial solution of the \lq\lq limit equation\rq\rq 
$$\Pmk(D^2u)+u=0\qquad\text{in $\RN$}.$$
 This result continues to hold for small perturbations of the domain $\Omega$, as showed in the Proposition \ref{perturbations}. We  emphasize that such proof is performed without employing a priori estimates and compactness arguments, as for the Laplacian, see \cite{GS}, but only  using the explicit expression of $U_p$ in the unit ball and the comparison principle.

\medskip

Consider now the problem 
\begin{equation}\label{Dirichlet+}
\left\{
\begin{array}{rl}
\Ppk(D^2u)+u^p=0 & \text{in $\Omega$}\\
u=0 &  \text{on $\partial\Omega$}.
\end{array}\right.
\end{equation}
Here we don't need to assume $u>0$, since the strong minimum principle applies to every solution of $\Ppk(D^2u)\leq0$, see \cite{BGI}. We obtain the following 
\begin{theorem}\label{existence2}
Let $\Omega\in\CR$ and let $k<N$ be a positive integer. 
\begin{itemize}
	\item[(1)] If $p\in(0,1)$ then there exists a unique positive solution $V\in C(\overline\Omega)$ of \eqref{Dirichlet+}. \\ Moreover $V>U$ in $\Omega$, where $U$ is the solution of \eqref{Dirichlet} provided by Theorem \ref{existence}.
	\item[(2)] If $k=1$ and $p$ is any number larger than 1, then \eqref{Dirichlet+} admits a solution.
\end{itemize}
\end{theorem}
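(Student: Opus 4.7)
\smallskip
\noindent\textbf{Part (1).} The plan is a sub/super-solution argument followed by Perron's method. Since $\Pmk(M)\leq\Ppk(M)$ for every symmetric matrix, the unique solution $U$ of \eqref{Dirichlet} furnished by Theorem \ref{existence} is automatically a viscosity subsolution of \eqref{Dirichlet+}, because
\[
\Ppk(D^2U)+U^p\geq\Pmk(D^2U)+U^p=0.
\]
For a supersolution, I would take the concave quadratic $\bar V(x)=c(R^2-|x-x_0|^2)$ on a ball $B_R(x_0)\supset\overline\Omega$: its Hessian $-2cI$ gives $\Ppk(D^2\bar V)=-2ck$, and the sublinearity $p<1$ allows $c$ to be chosen large enough that $-2ck+\bar V^p\leq 0$ throughout $\overline\Omega$, while $\bar V\geq 0=U$ on $\partial\Omega$. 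Perron's method, combined with the comparison principle for $\Ppk$ in the sublinear regime from \cite{BGI}, then produces a continuous solution $V$ with $U\leq V\leq\bar V$. Positivity of $V$ follows from the strong minimum principle for $\Ppk$ recalled in \cite{BGI}.

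\smallskip
\noindent\textbf{Uniqueness and $V>U$.} For uniqueness I would use the classical sublinear rescaling trick. Given two positive solutions $V_1,V_2$, set $t_\star=\sup\{t>0:tV_1\leq V_2\}$. The $1$-homogeneity of $\Ppk$ yields
\[
\Ppk\!\bigl(D^2(t_\star V_1)\bigr)+(t_\star V_1)^p=V_1^p\bigl[(t_\star)^p-t_\star\bigr],
\]
whose sign is opposite to that of $t_\star-1$ because $p<1$. Contact at an interior point between $t_\star V_1$ and $V_2$ then forces $t_\star=1$, and by symmetry $V_1\equiv V_2$. The inequality $V\geq U$ is already given by the Perron step; strictness $V>U$ comes from applying the strong minimum principle to the nonnegative difference $V-U$ at any hypothetical interior contact point.

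\smallskip
\noindent\textbf{Part (2): the ball.} On $\Omega=B_R$ I would exhibit an explicit radial solution. For radial $u(r)$ the eigenvalues of $D^2u$ are $u''(r)$ (simple, radial direction) and $u'(r)/r$ (with multiplicity $N-1$, tangential). Solving the Emden--Fowler problem
\[
u''(r)+u(r)^p=0,\qquad u'(0)=0,\qquad u(R)=0,
\]
with $u(0)$ fixed by the scaling invariance of the equation, produces a smooth decreasing positive profile on $[0,R)$. The auxiliary function $f(r)=u'(r)-ru''(r)$ satisfies $f(0)=0$ and
\[
f'(r)=-ru'''(r)=p\,r\,u(r)^{p-1}u'(r)<0,
\]
so $u''(r)>u'(r)/r$ on $(0,R]$, whence $\lambda_N(D^2u)=u''(r)$ and $u$ is a classical radial solution of \eqref{Dirichlet+} on $B_R$.

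\smallskip
\noindent\textbf{Extension to general $\Omega\in\CR$ (the main obstacle).} Passing from balls to a generic uniformly convex domain is the hard part of (2), since in the superlinear regime neither constants nor the quadratic used in (1) give a supersolution. My plan is to use the radial ball solution as an interior subsolution, extended by $0$ outside an inscribed ball $B_\rho\subset\Omega$: the inward sign of $u'(\rho^-)$ prevents any smooth test function from touching the extended profile from above at $\partial B_\rho$, so the viscosity subsolution property is preserved across the interface. I would then regularize
\[
\Ppo(D^2u)+\varepsilon\Delta u+u^p=0\text{ in }\Omega,\qquad u=0\text{ on }\partial\Omega,
\]
which for $\varepsilon>0$ is a uniformly elliptic superlinear Dirichlet problem to which mountain-pass or Leray--Schauder methods apply. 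The radial subsolution together with the uniform convexity $\Omega\in\CR$ should provide $L^\infty$-estimates uniform in $\varepsilon$, after which the stability of viscosity solutions yields a nontrivial solution of \eqref{Dirichlet+} by sending $\varepsilon\downarrow 0$. The $\varepsilon$-uniform $L^\infty$ bound, preventing the approximating family from collapsing to $0$ or blowing up, is the core technical obstacle.
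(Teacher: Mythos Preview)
Your sub/super-solution outline is close to the paper's, but there are two gaps.

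First, your supersolution $\bar V(x)=c(R^2-|x-x_0|^2)$ is strictly positive on $\partial\Omega$, so Perron's method by itself does not force the constructed $V$ to vanish on the boundary; you still need upper barriers at every boundary point. The paper handles this by exploiting the representation $\Omega=\bigcap_{y\in Y}B_R(y)$: for each $y\in Y$ one takes $v_y(x)=\tau(R^2-|x-y|^2)$ with $\tau=(R^{2p}/2k)^{1/(1-p)}$ and sets $\overline u=\inf_{y\in Y}v_y$, which is a Lipschitz supersolution that \emph{does} vanish on $\partial\Omega$. Uniqueness then follows directly from the comparison principle (Theorem~\ref{CP} via Remark~\ref{rem}); the rescaling trick you describe would also work but needs boundary analysis (Hopf) you do not supply.

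Second, your argument for $V>U$ is incomplete. Once you know $w:=V-U\geq0$ and (using $\Ppk(-X)=-\Pmk(X)$ together with subadditivity) that $\Ppk(D^2w)\leq0$, the strong minimum principle only yields the dichotomy $w>0$ in $\Omega$ \emph{or} $w\equiv0$. You do not exclude $V\equiv U$. The paper's device here is neat and specific to this pair of operators: if $V\equiv U$, then $U$ is a $\Ppk$-supersolution, hence satisfies the Hopf boundary lemma at any $x_0\in\partial\Omega$; but this contradicts the ``anti-Hopf'' property \eqref{hopf} of $U$ established in Theorem~\ref{existence}. You should incorporate this step.

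\textbf{Part (2).} Your explicit radial construction on the ball is correct and pleasant, and different from the paper. However, your extension to general $\Omega\in\CR$ via the elliptic regularization $\Ppo+\varepsilon\Delta$ is a genuinely different route and, as you acknowledge, the $\varepsilon$-uniform $L^\infty$ bound is the whole difficulty; you do not indicate how to obtain it, nor how to prevent collapse to $0$ (a radial subsolution does not give a lower bound in the superlinear regime, where no comparison is available). The paper avoids regularization entirely: it works directly with the solution map $\Phi(v)$ defined by $\Ppo(D^2u)=-v^p$, which for $k=1$ is compact on $C(\overline\Omega)$ thanks to the Lipschitz estimate of \cite[Proposition~3.1]{BGI}, and applies the de~Figueiredo--Lions--Nussbaum fixed point theorem in the positive cone. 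Condition~(i) comes from the maximum principle below the principal eigenvalue $\mu_1^+$; condition~(iii) from the characterization of $\mu_1^+$; and the a~priori bound (ii) is obtained by the Gidas--Spruck blow-up, passing to a limit equation in $\RN$ or $\RN_+$ and invoking the Liouville theorems of \cite{BGL}, which hold for $\Ppo$ for every $p>1$. This is why the restriction $k=1$ appears: it is exactly the case where compactness of $\Phi$ is currently available.
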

\noindent
The situation seems to be reversed with respect to the result concerning $\Pmk$ in the superlinear case, since at least for $k=1$ and $\Omega\in\CR$ there are nontrivial solutions for any $p>1$. In particular there are no \lq\lq critical values\rq\rq\, of $p$.
In order to explain such phenomena and describe how the proofs of Theorems \ref{existence}-\ref{existence2} are carried out, let us make a comparison  with the semilinear case $k=N$ and with the fully nonlinear uniformly elliptic counterpart.

\smallskip
\noindent
In the first case, both \eqref{eqbase-}-\eqref{eqbase+} reduce to  well known Lane-Emden equation
\begin{equation}\label{Laplacian}
\Delta u+u^p=0\qquad\text{in  $\Omega$}, \quad u=0\qquad\text{on $\partial\Omega$}.
\end{equation}
Since the problem \eqref{Laplacian} is in divergence form, then the existence issue of  positive solutions relies on the  minimization, in the Sobolev space $H^1_0(\Omega),$ of the associated energy functional 
$$
E(u)=\frac12\int_{\Omega}|Du|^2\,dx-\frac{1}{p+1}\int_{\Omega} |u|^{p+1}\,dx.
$$ 	
Hence variational methods provide the existence of positive solutions if $p<p^\star=\frac{N+2}{N-2}$. The bound on the exponent $p$ is related to the lack of compactness of the embedding $H^1_0(\Omega)\hookrightarrow L^{p^\star+1}(\Omega)$.  Moreover the classical Pohozaev's identity implies that such restriction on  $p$ is necessary if $\Omega$ is star shaped.  
 Of course, the operators $\Ppmk$ do not fit in the variational setting.\\
Another approach in establishing existence results is based on a priori estimates joined with topological degree arguments, as  in \cite{DFLN} (see also the survey paper \cite{L} and the references therein). This approach has been successfully applied  in the  fully nonlinear framework by Quaas and Sirakov \cite{QS} (see also \cite{FQ}, \cite{Q}). There the authors consider the Pucci's extremal operators ${\mathcal M}^\pm_{\lambda,\Lambda}$, the prototypes of fully nonlinear uniformly elliptic operators,  and they obtain a priori bounds by means of the blow-up technique \`a la Gidas-Spruck \cite{GS}. The success of these arguments rests first on the uniform ellipticity (regularity theory), then on  nonexistence results of Liouville type, which force the exponent $p$ to be  \lq\lq subcritical\rq\rq. The explicit expressions of the  critical exponents for entire supersolutions of ${\mathcal M}^\pm_{\lambda,\Lambda}(D^2u)+u^p=0$ are known, see \cite{CL}, and they reduce to the standard one $\frac{N}{N-2}$ when $\lambda=\Lambda=1$, i.e. when the Pucci's operators coincide with the Laplacian. A larger range of $p$ for the existence of radial solutions was obtained by Felmer-Quaas \cite{FQ2}, but in this case the explicit expressions for the critical exponents in terms of $\lambda,\Lambda,N$ are not known.  If $\Omega$ is an annulus, then the the existence of classical solutions of $F(x,D^2u)+u^p=0$ in $\Omega$, where $F$ is a radially invariant uniformly elliptic operator,  was obtained in \cite{GLP} for any $p>1$.

\smallskip

In the case of $\Ppmk$ the main obstruction in applying such arguments relies on the lack of compactness results in $C(\overline\Omega)$, whenever a priori estimates are available. If $k<N$ then the only known results concern the case $k=1$ and they are consequences  of H\"older regularity estimates, see \cite{BGI,OS}. For this reason we have avoided, as far as possible, the necessity of using compactness in proving our existence results. In the sublinear case, $p\in(0,1)$, for both $\Ppmk$ and any $k$ (not only $k=1$), we use a suitable change of variable, which goes back to \cite{BK} and converts \eqref{eqbase-}-\eqref{eqbase+} in proper equations for which the  Perron's method can be applied. Such approach has been already used  in the fully nonlinear setting in  \cite{CP}, but  there the uniformly ellipticity (ABP estimates, Harnack inequality etc.) is strongly used to get existence of solutions and, as mentioned before, such tools fail for $\Ppmk$. \\
On the other hand the implementation of the Perron's method  relies on the existence of barrier functions. To this aim one typically makes use of the uniformly  ellipticity. The  lack of uniformly ellipticity of $\Ppmk$ is offset  by a convexity assumption on $\Omega$, say $\Omega\in\CR$ in the sense of Definition \ref{convexity}. The convexity of $\Omega$ seems to be a natural assumption for the well-posedness  of the Dirichlet problem, see \cite{BGI,HL}.

In the superlinear case, $p>1$, the operators $\Pmk$ and $\Ppk$ produce totally different results. Such striking features have been already  investigated in \cite{BGL} for entire solutions. For $\Pmk$ the critical exponent that separates the existence and nonexistence range of positive (super)solutions is 1 for any $k$ and any dimension $N$, while $\Ppk$ acts similarly to the Laplacian in the $k$-dimensional space. Indeed for $k=1,2$ there are no entire positive supersolutions of $\Ppk(D^2u)+u^p=0$ for any $p$, while for $k>2$ the nonexistence result holds if, and only if, $p\leq\frac{k}{k-2}$. Moreover for $p\in\left(\frac{k}{k-2},\frac{k+2}{k-2}\right)$ there are no radial positive classical solutions, while such solutions exist when $p\geq\frac{k+2}{k-2}$. We observe that, differently from the Laplacian,  the failure of the strong maximum principle and the lack of symmetry results for $\Ppk$ does not allow to conclude that $\frac{k+2}{k-2}$ is the critical exponent for solutions. In particular the existence issue of nonradial solutions of $\Ppk(D^2u)+u^p=0$ in $\RN$ is an open problem.

\noindent
Another remarkable difference between $\Pmk$ and $\Ppk$ occurs at the eigenvalues level (see \cite{BGI}):  in any bounded domain $\Omega$, the operator $\Pmk$ does not admit positive eigenfunctions, since  $\Pmk(D^2\cdot)+\mu\cdot$ satisfies  the maximum principle for any  $\mu>0$. On the contrary the generalized principal eigenvalue \`a la Berestycki-Nirenberg-Varadhan for $\Ppk$, i.e. 
 $\sup\left\{\mu>0\,|\,\text{$\exists v>0$ s.t. $\Ppk(D^2v)+\mu v\leq0$ in $\Omega$}\right\}$, is finite and it corresponds (at least for $k=1$) to a positive eigenfunction.

The proof of  statement (2) of Theorem \ref{existence2} is based on a fixed point theorem and Leray-Schauder degree theory for compact operators. We follow the approach used by Quaas-Sirakov \cite{QS} in the case of Pucci's extremal uniformly elliptic operators. Such argument relies on compactness and this explain the reason for assuming $k=1$.   
There are no restriction on the exponent $p$ if $k=1$, since in this case the are no critical exponents  both in $\RN$ and  in the halfspace $\partial\RN_+$, as showed in \cite{BGL}. We stress that if the compactness of solutions is extended to $k>1$, then automatically the previous theorem would extend to these values of $k$ for $p$ below a corresponding critical exponent.

We conclude the Introduction by pointing out that the lack of information concerning   the critical exponent for $\Ppk$ and the uniqueness of  solutions of the corresponding equations, does not allow us to perform the asymptotic analysis as done in the case of $\Pmk$.

\bigskip

The paper is organized as follows: in Section \ref{Preliminaries} we recall some useful fact about the operators $\Ppmk$ and prove a comparison principle between subsolutions and (positive) supersolutions in the sublinear case $p\in(0,1)$; Section \ref{Sec3} is  devoted to the proof of Theorem \ref{existence} and to the asymptotic profile of the solution $U_p$ when $p\to1$; Theorem \ref{existence2} is proved in Section \ref{Sec4}; in the last section  we deal with a larger class of degenerate operators giving some extension of the previous results.

\section{Preliminaries}\label{Preliminaries}
We consider the linear space $\SN$ of real symmetric matrices of order $N$, $N\geq2$, endowed with the usual partial order: $X\leq Y$ in $\SN$ means that  $\left\langle X\zeta,\zeta\right\rangle\leq\left\langle Y\zeta,\zeta\right\rangle$ for any $\zeta\in\RN$, where $\left\langle\cdot,\cdot\right\rangle$ is the Euclidean inner product. Hereafter the eigenvalues of any $X\in\SN$ will be arranged in nondecreasing order:
$$
\lambda_1(N)\leq\ldots\leq\lambda_N(X).
$$ 
The norm of $X\in\SN$ is $\displaystyle\left\|X\right\|=\sup_{i=1,\ldots,N}\left\{|\lambda_i(X)|\right\}$, while $\tr(X)$ stands for the trace of $X$.\\ For $\zeta\in\RN$ the tensor product $X=\zeta\otimes\zeta$ is the symmetric matrix whose $i,j$-entry is $X_{ij}=\zeta_i\zeta_j$. In this case $\lambda_1(X)=0$, with multiplicity (at least) $N-1$ and eigenspace  orthogonal to $\zeta$,
 while $\lambda_N(X)=|\zeta|^2$ which is simple (if $\zeta\neq0$) and eigenspace spanned by $\zeta$.

%
\medskip

The operators $\Ppmk:\SN\mapsto\R$ are defined by
$$
\Pmk(X)=\lambda_1(X)+\ldots+\lambda_k(X)\qquad\text{and}\qquad\Ppk(X)=-\Pmk(-X).
$$
It is easy to see that both $\Pmk$ and $\Ppk$ are positively homogeneous operators of degree one, i.e $\Ppmk(tX)=t\,\Ppmk(X)$ for $t>0$, and $\Pmk(X)\leq\Ppk(X)$ for any $X\in\SN$.

Their  (degenerate) ellipticity  is expressed by the following monotonicity property:
\begin{equation}\label{ellipticity}
X\leq Y\quad\text{in $\SN$}\quad\Rightarrow\quad \Ppmk(X)\leq\Ppmk(Y).
\end{equation}
Condition \eqref{ellipticity} follows from  Courant's min-max representation formula for eigenvalues   $$ \lambda_i(X)=\min_{\dim V=i}\max_{\zeta\in V\backslash\left\{0\right\}}\frac{\left\langle X\zeta,\zeta\right\rangle}{|\zeta|^2}\qquad\text{for $i=1,\ldots,N$},$$
which tells in particular  that $\lambda_i(X)$ depends   monotonically  on $X$.\\
On the other hand, the strong notion of uniformly ellipticity, i.e. 
\begin{equation}\label{uniformellipticity}
\Ppmk(X)-\Ppmk(Y)\geq\lambda \tr(X-Y)\qquad\text{for some $\lambda>0$} 
\end{equation}
if $X\geq Y$, fails to be true. Actually it holds if, and only if, $k=N$.  When $k<N$ the degenerate traits of the operators $\Ppmk$ appear, for instance in the lack of strict ellipticity along any fixed direction $\zeta\in\RN$:
\begin{equation}\label{highdegeneracy}
\min_{X\in\SN}\left(\Ppmk(X+\zeta\otimes\zeta)-\Ppmk(X)\right)=0.
\end{equation}
Some properties concerning $\Ppmk$, such as sub/superadditivity or continuity in the matrix argument, can be easily deduced from the equivalent characterization of such operators as infimum and supremum of linear mappings acting on $\SN$ (see \cite[Lemma 8.1]{CLN}):
\begin{equation}\label{representation-}
\Pmk(X)=\inf\left\{\sum_{i=1}^k\left\langle X\zeta_i,\zeta_i\right\rangle\,|\,\,\text{$\zeta_i\in\RN$ and $\left\langle\zeta_i,\zeta_j\right\rangle=\delta_{ij}$ for $i,j=1,\ldots,k$}\right\}
\end{equation}
and
\begin{equation}\label{representation+}
\Ppk(X)=\sup\left\{\sum_{i=1}^k\left\langle X\zeta_i,\zeta_i\right\rangle\,|\,\,\text{$\zeta_i\in\RN$ and $\left\langle\zeta_i,\zeta_j\right\rangle=\delta_{ij}$ for $i,j=1,\ldots,k$}\right\}.
\end{equation}
In particular we deduce from \eqref{representation-}-\eqref{representation+} that $\Pmk$ is concave and $\Ppk$ is convex in $\SN$. Moreover the following inequalities are satisfied: for any $X,Y\in\SN$ 
\begin{equation}\label{subsuperadditivity}
\Pmk(X-Y)\leq\Ppmk(X)-\Ppmk(Y)\leq\Ppk(X-Y)
\end{equation}
and
\begin{equation}\label{continuity}
\left|\Ppmk(X)-\Ppmk(Y)\right|\leq k\left\|X-Y\right\|.
\end{equation}

\medskip
Let us look now at the solvability of the Dirichlet boundary value problem for $\Ppmk$ in a  bounded domain $\Omega$. Recall that, unless otherwise specified, by a \emph{solution} (resp. subsolution, supersolution)  we mean  \emph{continuous viscosity solution} (resp. upper semicontinuous viscosity subsolution, lower semicontinuous viscosity supersolution).

  The only ellipticity condition  \eqref{ellipticity} is sufficient to get a weak form of the maximum principle, hence the uniqueness of solutions.  The following maximum principle is a particular case of \cite[Proposition 4.1]{GV}.

\begin{proposition}\label{MP}
Let $u\in USC(\overline\Omega)$ be a subsolution of 
$$
\Ppk(D^2u)=f(x) \qquad \text{in $\Omega$},
$$
with $f\in C(\Omega)$ . Then
\begin{equation}\label{stima} 
\sup_\Omega u\leq\limsup_{x\to\partial\Omega}u(x)+C\left\|f^-\right\|_{L^\infty(\Omega)}
\end{equation}
where $C$ is a constant depending only on $k$ and ${\rm diam}\,\Omega$.
\end{proposition}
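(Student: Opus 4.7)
The plan is to reduce the estimate to a homogeneous weak maximum principle by subtracting off an explicit quadratic barrier that absorbs $\|f^-\|_\infty$. Let $d=\mathrm{diam}\,\Omega$, pick $x_0\in\RN$ with $\Omega\subset\overline{B_d(x_0)}$, and define
$$
\phi(x)=\frac{\|f^-\|_{L^\infty(\Omega)}}{2k}\bigl(d^2-|x-x_0|^2\bigr).
$$
This $\phi\in C^2(\RN)$ satisfies $\phi\ge 0$ on $\overline\Omega$, $\sup_\Omega\phi\le\frac{d^2}{2k}\|f^-\|_\infty$, and $D^2\phi=-\frac{\|f^-\|_\infty}{k}I$, so that $\Ppk(D^2\phi)=-\|f^-\|_\infty\le f(x)$ pointwise in $\Omega$.

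Next, I would show that $v:=u-\phi$ is a viscosity subsolution of $\Ppk(D^2 v)=0$ in $\Omega$. Indeed, if $\psi\in C^2$ touches $u$ from above at $\bar x\in\Omega$, then $\psi-\phi$ touches $v$ from above at $\bar x$, and by the right-hand side of \eqref{subsuperadditivity},
$$
\Ppk\bigl(D^2(\psi-\phi)(\bar x)\bigr)\ge\Ppk(D^2\psi(\bar x))-\Ppk(D^2\phi(\bar x))\ge f(\bar x)+\|f^-\|_\infty\ge 0.
$$
Granted the weak maximum principle $\sup_\Omega v\le\limsup_{x\to\partial\Omega}v(x)$ for $\Ppk(D^2 v)\ge 0$, one then obtains
$$
\sup_\Omega u\le\sup_\Omega v+\sup_\Omega\phi\le\limsup_{x\to\partial\Omega}u(x)+\tfrac{d^2}{2k}\|f^-\|_{L^\infty(\Omega)},
$$
which is \eqref{stima} with $C=d^2/(2k)$.

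It remains to establish the weak maximum principle for $\Ppk$. I would argue by contradiction: assume $\sup_\Omega v>M:=\limsup_{x\to\partial\Omega}v(x)$ and consider the perturbation $v_\varepsilon(x)=v(x)+\varepsilon|x-x_0|^2$. Applying the left-hand side of \eqref{subsuperadditivity} with $X=D^2 v_\varepsilon$ and $Y=D^2 v$ yields, in the viscosity sense,
$$
\Ppk(D^2 v_\varepsilon)\ge\Ppk(D^2 v)+\Pmk(2\varepsilon I)\ge 2k\varepsilon>0.
$$
For $\varepsilon>0$ small enough one still has $\sup_\Omega v_\varepsilon>M+\varepsilon d^2\ge\limsup_{x\to\partial\Omega}v_\varepsilon(x)$, so the USC function $v_\varepsilon$ attains its maximum at some interior point $\bar x\in\Omega$. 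Taking the constant function $\psi\equiv v_\varepsilon(\bar x)$ as an upper test function at $\bar x$, the viscosity subsolution property forces $0=\Ppk(0)\ge 2k\varepsilon$, a contradiction.

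The only delicate point I would check carefully is the passage from the purely algebraic inequalities \eqref{subsuperadditivity} to their viscosity-level consequences in the two reductions above: since both $\phi$ and $\varepsilon|x-x_0|^2$ are smooth, this reduces to a matrix inequality at the test points, but one has to invoke the \emph{right} half of \eqref{subsuperadditivity} for the barrier step and the \emph{left} half for the perturbation step, lest a sign get reversed. Everything else is direct bookkeeping, and the constant $C=d^2/(2k)$ depends only on $k$ and $\mathrm{diam}\,\Omega$ as required.
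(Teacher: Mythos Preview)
Your argument is correct. The barrier $\phi$ is exactly the right object, the passage from $u$ to $v=u-\phi$ via the right-hand inequality in \eqref{subsuperadditivity} is clean (every test function for $v$ is $\psi-\phi$ for a test function $\psi$ of $u$, since $\phi$ is smooth), and the strict perturbation $v_\varepsilon=v+\varepsilon|x-x_0|^2$ together with the left-hand inequality in \eqref{subsuperadditivity} gives the interior-maximum contradiction. The boundedness of $u$ from above (needed to locate an interior maximum) is automatic because $u\in USC(\overline\Omega)$ on a compact set.

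As for comparison with the paper: there is nothing to compare against, because the paper does not prove Proposition~\ref{MP}. It simply records the statement and cites \cite[Proposition~4.1]{GV} as the source. Your proof is a self-contained substitute for that citation; it is the standard ``subtract a quadratic, then perturb to a strict subsolution'' scheme, and it delivers the explicit constant $C=({\rm diam}\,\Omega)^2/(2k)$, which the paper does not spell out.
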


We focus on the existence issue.  It is well know that  the existence  of solutions of elliptic operators, satisfying  boundary conditions of Dirichlet type, is connected to the existence of barrier functions, which in turn depends on geometric properties of the boundary. In the case of $\Ppmk$, the barriers construction can be performed (see \cite[Section 2]{BGI}) in the case of uniformly convex domains:
\begin{definition}\label{convexity}
$\Omega\in\CR$ if, and only if, there exists $R>0$ and a subset $Y$ of $\RN$, depending on $\Omega$, such that
\begin{equation}\label{hulahoop}
\Omega=\bigcap_{y\in Y}B_R(y).
\end{equation}
\end{definition}
From the above definition it follows that  for any $x\in\partial\Omega$ there exists $y\in Y$, $y=y(x)$ and not necessarily unique, such that $B_R(y)\supseteq \Omega$ and $x\in\partial B_R(y)$. See also \cite[Chapter 14, Section 2]{GT}. We emphasize that, in general, the boundary of a domain $\Omega\in\CR$ is not a smooth surface, e.g. the intersection of a finite number of balls.\\
In this context (see \cite[Proposition 5.1]{BGI}): 

\begin{proposition}\label{exis}
Let $\Omega\in\CR$. If $f\in C(\Omega)$ is bounded, then the problems 
\begin{equation}\label{Dir}
\left\{
\begin{array}{rl}
\Ppmk(D^2u)=f(x) & \text{in $\Omega$}\\
u=0 \hspace{0.5cm}&  \text{on $\partial\Omega$}
\end{array}\right.
\end{equation}
have unique solutions.
\end{proposition}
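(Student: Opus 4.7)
The plan is to run Perron's method, deriving uniqueness separately from Proposition~\ref{MP}.

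First, uniqueness is immediate: if $u_1, u_2 \in C(\overline\Omega)$ both solve the problem, the sub/superadditivity inequality \eqref{subsuperadditivity} gives $\Ppk(D^2(u_1-u_2)) \geq \Ppmk(D^2u_1) - \Ppmk(D^2u_2) = 0$, and $u_1 - u_2 = 0$ on $\partial\Omega$, so Proposition~\ref{MP} applied to $\pm(u_1-u_2)$ forces $u_1 \equiv u_2$.

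For existence, I would first produce barriers using $\Omega \in \CR$. Given $x_0 \in \partial\Omega$, pick $y_0 = y(x_0) \in Y$ with $B_R(y_0) \supseteq \Omega$ and $x_0 \in \partial B_R(y_0)$, and set
$$
w^{\pm}_{x_0}(x) \;=\; \pm A\bigl(R^2 - |x - y_0|^2\bigr), \qquad A := \frac{\|f\|_{L^\infty(\Omega)}}{2k}.
$$
Since $D^2 w^{\pm}_{x_0} = \mp 2AI$ and $\Ppmk(\alpha I) = \alpha k$ for every $\alpha \in \R$, the function $w^{+}_{x_0}$ is a classical supersolution and $w^{-}_{x_0}$ a classical subsolution of $\Ppmk(D^2u) = f$; both vanish at $x_0$ and have the correct sign on $\overline\Omega$. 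Fixing one $y \in Y$, the function $\overline u(x) := A(R^2 - |x-y|^2)$ is moreover a \emph{global} nonnegative supersolution. I would then define the Perron envelope
$$
u(x) \;:=\; \sup\bigl\{\, v(x) : v \in USC(\overline\Omega),\ \Ppmk(D^2v) \geq f \text{ in } \Omega,\ v \leq 0 \text{ on } \partial\Omega\,\bigr\},
$$
and carry out the following standard steps: (i) by the comparison principle every admissible $v$ is bounded above by $\overline u$, so $u$ is finite and $u \leq \overline u$; (ii) by the bump-perturbation lemma (\cite{CIL, IL}) $u^*$ is a subsolution and $u_*$ a supersolution in $\Omega$; (iii) comparison applied to $u^*$ and $u_*$ yields $u^* = u_* = u$, so $u$ is a viscosity solution; (iv) at each $x_0 \in \partial\Omega$, $w^{-}_{x_0}$ is admissible so $u \geq w^{-}_{x_0}$, while comparison of the supersolution $w^{+}_{x_0}$ against any admissible $v$ gives $u \leq w^{+}_{x_0}$; the sandwich forces $u(x) \to 0$ as $x \to x_0$.

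The main obstacle is the comparison principle for $\Ppmk(D^2 u) = f(x)$: because no zero-order term is present, the naive doubling-of-variables argument does not close. My remedy is a strict-subsolution perturbation built from $\overline u$: given a subsolution $v$ and a supersolution $w$ with $v \leq w$ on $\partial\Omega$, set $v_\epsilon := v - \epsilon\, \overline u$. Using $\Ppmk(X + tI) = \Ppmk(X) + tk$ together with the classical stability of viscosity subsolutions under smooth perturbations, $v_\epsilon$ is a strict subsolution, $\Ppmk(D^2 v_\epsilon) \geq f + 2\epsilon A k$. The strict slack lets Ishii's lemma deliver $v_\epsilon \leq w$, and letting $\epsilon \to 0^+$ concludes. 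The extreme degeneracy \eqref{highdegeneracy} of $\Ppmk$ plays no role here: Ishii's lemma still produces ordered test matrices $X \leq Y$, and the ellipticity \eqref{ellipticity} alone is enough.
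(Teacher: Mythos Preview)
The paper does not give its own proof of this proposition: it is quoted from \cite[Proposition~5.1]{BGI}. Your sketch is essentially the argument one expects to find there---Perron's method fed by the quadratic barriers $\pm A(R^2-|x-y_0|^2)$ coming from the hula-hoop condition \eqref{hulahoop}, together with a comparison principle obtained by perturbing the subsolution by a smooth strict supersolution---and it is correct.

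Two minor remarks. First, your opening uniqueness argument invokes \eqref{subsuperadditivity} as though $u_1,u_2$ were $C^2$; for viscosity solutions the inequality $\Ppk(D^2(u_1-u_2))\geq 0$ is not automatic and requires its own doubling-of-variables step (this is precisely \cite[Lemma~3.1]{GV}, used later in the paper). Since the strict-perturbation comparison you prove at the end already yields uniqueness directly, the slip is harmless. Second, in the Perron outline, step (iv) should logically precede step (iii): one first uses the two-sided barriers $w^{\pm}_{x_0}$ to force $u^*=u_*=0$ on $\partial\Omega$, and only then is comparison between the subsolution $u^*$ and the supersolution $u_*$ available in the interior.
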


We conclude this section by showing a comparison principle type result for
\begin{equation}\label{eqprel}
\Ppmk(D^2u)+u^p=0\quad\text{in $\Omega$}
\end{equation}
in the sublinear case $p\in(0,1)$. Here $\Omega$ is an arbitrary bounded domain of $\RN$. As already stated in the Introduction, the strong degeneracy of $\Ppmk$ is reflected on the validity of the maximum/minimum principles which are affected by lower order perturbations in the equations and they could be fail without further restrictions. As a matter of fact for any $x_0\in\Omega$ and $r$ small enough, let us consider the function
\begin{equation}\label{funz u}
u(x)=\left\{
\begin{array}{cl}
\left[\frac{1-p}{2k}(r^2-|x-x_0|^2)\right]^{\frac{1}{1-p}} & \text{if $x\in B_r(x_0)$}\\
0 & \text{if $x\in \overline\Omega\backslash B_r(x_0)$}.
\end{array}\right.
\end{equation}
It provides a nontrivial example of solution (see the proof of Theorem 1.2 in \cite{BGL}) of
$$
\Pmk(D^2u)+u^p=0\qquad\text{in  $\Omega$}, \quad u=0\qquad\text{on $\partial\Omega$}.
$$
 Hence the uniqueness of solutions with prescribed boundary data is violated. On the other hand if we restrict to the class of positive supersolutions, so excluding the functions  compactly supported in $\Omega$ as \eqref{funz u}, then the comparison between sub and supersolutions of \eqref{eqprel} holds true. The proof is carried out  by the change of variable $U(x)=\frac{1}{1-p}u^{1-p}(x)$, which goes back to \cite{BK}. We notice that   the only properties of $\Ppmk$  we shall use in the proof of comparison are the ellipticity and 1-homogeneity. For this reason and  for notation simplicity we shall detail the computations in the case of $\Pmk$, with obvious changes if $\Ppk$ is considered. \\
We first prove the following

\begin{lemma}\label{lemma}
If $u\in USC(\Omega)$ is a nonnegative subsolution ($u\in LSC(\Omega)$ supersolution) of \eqref{eqbase-} and $p\in(0,1)$, then the function $U$ defined by $U(x)=\frac{1}{1-p}u^{1-p}(x)$ is a nonnegative subsolution (supersolution) in $\Omega$ of
\begin{equation}\label{eq1lem}
\Pmk\left(UD^2U+\frac{p}{1-p}DU\otimes DU\right)+U=0.
\end{equation}
\end{lemma}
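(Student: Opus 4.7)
The plan is to establish the identity first in the smooth setting and then transfer it to the viscosity sense via the change of test function $\psi = [(1-p)\phi]^{1/(1-p)}$, which is the standard Brezis--Kamin substitution.

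First I would do the formal computation. If $u>0$ is $C^2$ and $U=\tfrac{1}{1-p}u^{1-p}$, differentiating gives
$$
DU=u^{-p}Du,\qquad D^2U=u^{-p}D^2u-p\,u^{-p-1}Du\otimes Du,
$$
and a short algebraic manipulation shows
$$
U\,D^2U+\tfrac{p}{1-p}DU\otimes DU=\tfrac{1}{1-p}u^{1-2p}D^2u,
$$
because the two terms containing $Du\otimes Du$ cancel exactly. Using the $1$-homogeneity of $\Pmk$ stated in Section~\ref{Preliminaries}, this yields
$$
\Pmk\!\left(UD^2U+\tfrac{p}{1-p}DU\otimes DU\right)+U=\tfrac{u^{1-2p}}{1-p}\bigl[\Pmk(D^2u)+u^p\bigr],
$$
and since $\tfrac{u^{1-2p}}{1-p}>0$, the sub/supersolution sign is preserved.

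Next, I would promote this identity to the viscosity sense. Suppose $U-\phi$ has a local maximum at $x_0\in\Omega$ with $\phi\in C^2$; we may assume $\phi(x_0)=U(x_0)$. Consider two cases. If $u(x_0)>0$, then $U(x_0)>0$ and hence $\phi>0$ in a neighborhood $V$ of $x_0$, so the function $\psi:=[(1-p)\phi]^{1/(1-p)}$ is $C^2$ on $V$. Since $t\mapsto[(1-p)t]^{1/(1-p)}$ is strictly increasing on $(0,\infty)$ and $u=[(1-p)U]^{1/(1-p)}$, the function $u-\psi$ attains a local maximum at $x_0$ with $\psi(x_0)=u(x_0)$. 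Applying the subsolution property of $u$ at $x_0$ with test function $\psi$, then computing $D^2\psi(x_0)$ in terms of $\phi$ by the chain rule (which is the reverse of the smooth calculation above), one arrives at the desired inequality for $U$ and $\phi$ after multiplying by the positive factor $\tfrac{u^{1-2p}(x_0)}{1-p}$.

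If $u(x_0)=0$, then $U(x_0)=0$ and $\phi(x_0)=0$; since $U\ge 0$, the local maximum of $U-\phi$ at $x_0$ forces $\phi\ge 0$ near $x_0$ with $\phi(x_0)=0$, hence $D\phi(x_0)=0$. Then
$$
U(x_0)D^2\phi(x_0)+\tfrac{p}{1-p}D\phi(x_0)\otimes D\phi(x_0)=0,
$$
so $\Pmk$ of this matrix plus $U(x_0)$ equals $0$, and the subsolution inequality is trivially satisfied. The supersolution case is entirely analogous, noting that even when $D\phi(x_0)\neq 0$, the rank-one matrix $\tfrac{p}{1-p}D\phi\otimes D\phi$ is positive semidefinite, so its $\Pmk$ is $\ge 0$ and does not cause trouble.

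The only mildly delicate point is the viscosity translation at points where $u$ vanishes, since the inverse change of variable $\psi=[(1-p)\phi]^{1/(1-p)}$ is singular there; however, the positivity of the tensor product term and the sign of $U$ together with the ellipticity \eqref{ellipticity} handle this case directly, so no approximation argument is required. As noted, only the $1$-homogeneity and ellipticity of $\Pmk$ are used, so the identical argument covers the $\Ppk$ case.
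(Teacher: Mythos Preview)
Your argument is correct and follows the same route as the paper: in the subsolution case you test $U$ from above by $\phi$, split according to whether $u(x_0)>0$ or $u(x_0)=0$, and in the former case push the test function back to $u$ via $\psi=[(1-p)\phi]^{1/(1-p)}$, while in the latter you use that $\phi$ has a local minimum at $x_0$ so $D\phi(x_0)=0$. The preliminary smooth identity you display is a useful motivation that the paper omits but is not logically needed.

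One small slip: in your parenthetical on the supersolution case at $u(x_0)=0$, saying ``its $\Pmk$ is $\ge 0$'' points in the wrong direction, since for a supersolution you need $\Pmk(\cdot)+U(x_0)\le 0$. The correct observation is that for $k<N$ the $k$ smallest eigenvalues of the rank-one positive semidefinite matrix $D\phi(x_0)\otimes D\phi(x_0)$ are all zero, so $\Pmk$ of that matrix equals $0$ exactly, and the supersolution inequality holds with equality. (For $\Ppk$ this computation gives $|D\phi(x_0)|^2\ge 0$, which would be problematic, but there the strong minimum principle of \cite{BGI} prevents a nontrivial nonnegative supersolution from vanishing in $\Omega$, so the case does not arise.)
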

\begin{proof}
We treat the subsolution case, the other one being similar. Let $\varphi\in C^2(\Omega)$ be a test function touching $U$ from above at $x_0\in\Omega$, i.e.
$$
\max_{x\in B_\delta(x_0)}(U(x)-\varphi(x))=U(x_0)-\varphi(x_0)=0.
$$
If $U(x_0)=0$, then $x_0$ is a local minimum point for $\varphi$. Hence $D\varphi(x_0)=0$ and 
$$
\Pmk\left(\varphi(x_0)D^2\varphi(x_0)+\frac{p}{1-p}D\varphi(x_0)\otimes D\varphi(x_0)\right)+\varphi(x_0)=0.
$$
Otherwise $U(x_0)>0$ and the function $\psi(x)=(1-p)^{\frac{1}{1-p}}\varphi^{\frac{1}{1-p}}(x)$ touches $u$ from above at $x_0$. Since $u(x_0)>0$ then $\psi$ is $C^2$ in a neighborhood of $x_0$ (notice that if $p\geq\frac12$, then $\psi$ is $C^2$ even in the case $u(x_0)=0$). A straightforward computation yields
$$
D^2\psi(x_0)=(1-p)^{\frac{p}{1-p}}\varphi^{\frac{p}{1-p}-1}(x_0)\left(\varphi(x_0)D^2\varphi(x_0)+\frac{p}{1-p}D\varphi(x_0)\otimes D\varphi(x_0)\right)
$$
and, since $u$ is a subsolution of \eqref{eqbase-}, one has
\begin{equation*}
\begin{split}
0&\leq\psi^p(x_0)+\Pmk(D^2\psi(x_0))\\&=(1-p)^{\frac{p}{1-p}}\varphi^{\frac{p}{1-p}-1}(x_0)\left[\Pmk\left(\varphi(x_0)D^2\varphi(x_0)+\frac{p}{1-p}D\varphi(x_0)\otimes D\varphi(x_0)\right)+\varphi(x_0)\right].
\end{split}
\end{equation*}
Hence the conclusion follows from the fact that $\varphi(x_0)>0$.
\end{proof}

\begin{theorem}[\textbf{Comparison principle}]\label{CP}
 Let $u\in USC(\overline\Omega)$ and $v\in LSC(\overline\Omega)$ be respectively a sub and supersolution of \eqref{eqbase-} and let $p\in(0,1)$. If  $u\leq v$ on $\partial\Omega$ and $v$ is strictly positive in $\Omega$, then $u\leq v$ in $\overline\Omega$.
\end{theorem}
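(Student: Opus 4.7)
The plan is to exploit the change of variable from Lemma~\ref{lemma} to recast \eqref{eqbase-} as an equation on which Ishii's doubling of variables closes, after replacing the supersolution $v$ by a slight dilation that turns it into a \emph{strict} supersolution. No uniform ellipticity, regularity, or compactness input is required.

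First I would reduce to $u\ge 0$ by replacing $u$ with $u^+$ (at points where $u^+=0$, any test $\varphi$ touching from above satisfies $\varphi(x_0)=0$ and is locally minimized there, so $D\varphi(x_0)=0$, $D^2\varphi(x_0)\ge 0$, and the subsolution inequality is automatic). Apply Lemma~\ref{lemma}: the functions $U := \tfrac{1}{1-p}u^{1-p}$ and $V := \tfrac{1}{1-p}v^{1-p}$ are respectively a viscosity sub- and supersolution of
\[
\mathcal{G}(U,DU,D^2U)\,:=\,\Pmk\!\left(U\,D^2U+\tfrac{p}{1-p}\,DU\otimes DU\right)+U\,=\,0,
\]
with $U\le V$ on $\partial\Omega$ and $V>0$ in $\Omega$. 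For any $\lambda>1$ set $\hat V := \lambda V$. If $\hat\psi$ is a test function touching $\hat V$ from below at $y_0$ then $\psi=\hat\psi/\lambda$ touches $V$ from below, so the positive $1$-homogeneity of $\Pmk$ together with the supersolution inequality for $V$ gives
\[
\Pmk\!\left(\hat\psi(y_0)D^2\hat\psi(y_0)+\tfrac{p}{1-p}D\hat\psi(y_0)\otimes D\hat\psi(y_0)\right)+\hat\psi(y_0)\,\le\,(1-\lambda)\,\hat\psi(y_0)\,<\,0,
\]
so $\hat V$ is a strict viscosity supersolution of $\mathcal{G}=0$. It then suffices to show $U\le\hat V$ in $\Omega$ for every $\lambda>1$ and pass to the limit $\lambda\to 1^+$.

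To prove $U\le\hat V$, suppose for contradiction $\sup_{\overline\Omega}(U-\hat V)=M>0$. Since $U\le V<\hat V$ on $\partial\Omega$, the standard Ishii doubling $\Phi_\varepsilon(x,y)=U(x)-\hat V(y)-\tfrac{|x-y|^2}{2\varepsilon}$ produces interior maximizers $(x_\varepsilon,y_\varepsilon)\to\bar x\in\Omega$ with $U(x_\varepsilon)\ge\hat V(y_\varepsilon)+M>0$ and, via Ishii's Lemma, matrices $X_\varepsilon\le Y_\varepsilon$ together with $(p_\varepsilon,X_\varepsilon)\in\overline J^{2,+}U(x_\varepsilon)$, $(p_\varepsilon,Y_\varepsilon)\in\overline J^{2,-}\hat V(y_\varepsilon)$, where $p_\varepsilon=(x_\varepsilon-y_\varepsilon)/\varepsilon$. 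Dividing the sub- and strict super-inequalities by the positive numbers $U(x_\varepsilon)$ and $\hat V(y_\varepsilon)$ respectively and using the $1$-homogeneity, they recast as
\[
\Pmk\!\left(X_\varepsilon+\tfrac{c}{U(x_\varepsilon)}\,p_\varepsilon\otimes p_\varepsilon\right)\ge -1,\qquad \Pmk\!\left(Y_\varepsilon+\tfrac{c}{\hat V(y_\varepsilon)}\,p_\varepsilon\otimes p_\varepsilon\right)\le -\lambda,
\]
with $c:=p/(1-p)$. Since $U(x_\varepsilon)>\hat V(y_\varepsilon)$ forces $c/U(x_\varepsilon)<c/\hat V(y_\varepsilon)$, combining with $X_\varepsilon\le Y_\varepsilon$ and the ellipticity \eqref{ellipticity} of $\Pmk$ yields $-1\le -\lambda$, contradicting $\lambda>1$.

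The main obstacle is the lack of proper structure of $\mathcal{G}$: the multiplicative coupling between $U$ and $D^2U$ blocks any direct monotonicity in the zero-order argument, so a naive subtraction of the sub- and super-inequalities does not produce a contradiction. The scaling $V\mapsto\lambda V$, legitimate precisely because $\Pmk$ is positively $1$-homogeneous, is what injects the strict slack ``$-\lambda<-1$'' needed to close the doubling step; the hypothesis $v>0$ in $\Omega$ is used exactly to keep the denominators $\hat V(y_\varepsilon)$ bounded away from $0$ at the maximizers.
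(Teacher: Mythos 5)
Your proof is correct and follows essentially the same route as the paper: the Brezis--Kamin change of variable from Lemma~\ref{lemma}, scaling the transformed supersolution by a factor slightly larger than $1$ (the paper uses $1+\varepsilon$, you use $\lambda$) to make it a strict supersolution, Ishii's doubling of variables, and then $X_\varepsilon\leq Y_\varepsilon$ together with $U(x_\varepsilon)>\hat V(y_\varepsilon)$ and the positive semidefiniteness of $p_\varepsilon\otimes p_\varepsilon$ to conclude by ellipticity. The only addition is your explicit reduction to $u\geq 0$ via $u^+$, which the paper leaves implicit.
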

\begin{proof}
By contradiction we suppose that $\Omega^+=\left\{x\in\Omega:\,(u-v)(x)>0\right\}\not\equiv\emptyset$. Then for $\varepsilon>0$ fixed and small enough, we have that $\max_{\overline \Omega}(U-V_\varepsilon)>0$, where
$$ 
U(x)=\frac{1}{1-p}u^{1-p}(x)\quad\text{and}\quad V_\varepsilon(x)=(1+\varepsilon)V(x)=\frac{1+\varepsilon}{1-p}v^{1-p}(x)
$$ 
are respectively a nonnegative subsolution of \eqref{eq1lem} and a positive supersolution of 
\begin{equation}\label{eq2lem}
\Pmk\left(D^2V_\varepsilon+\frac{p}{1-p}\frac{DV_\varepsilon\otimes DV_\varepsilon}{V_\varepsilon}\right)+1\leq-\varepsilon.
\end{equation}
Doubling the variables, see \cite[Section 3]{CIL}, we consider for $\alpha>0$
\begin{equation}\label{eq3lem}
\begin{split}
\max_{\overline\Omega\times\overline\Omega}\left(U(x)-V_\varepsilon(y)-\frac\alpha2|x-y|^2\right)&=U(x_\alpha)-V_\varepsilon(y_\alpha)-\frac\alpha2|x_\alpha-y_\alpha|^2\\
&\geq\max_{\overline \Omega}(U(x)-V_\varepsilon(x))>0,
\end{split}
\end{equation}
with $(x_\alpha,y_\alpha)\in\Omega\times\Omega$ for $\alpha$ large. In view of \cite[Theorem 3.2]{CIL} there exists $X_\alpha$, $Y_\alpha\in\SN$ such that $X_\alpha\leq Y_\alpha$ and 
$$
(\alpha(x_\alpha-y_\alpha),X_\alpha)\in\overline J_\Omega^{2,+}U(x_\alpha), \quad (\alpha(x_\alpha-y_\alpha),Y_\alpha)\in\overline J_\Omega^{2,-}V_\varepsilon(y_\alpha).
$$
Using \eqref{eq1lem}, \eqref{eq2lem} and the fact that $U(x_\alpha)>V_\varepsilon(y_\alpha)$ as a consequence of \eqref{eq3lem} we get
\begin{equation*}
\begin{split}
0&\leq\Pmk\left(X_\alpha+\frac{p}{1-p}\frac{\alpha(x_\alpha-y_\alpha)\otimes\alpha(x_\alpha-y_\alpha)}{U(x_\alpha)}\right)+1\\
&\leq\Pmk\left(Y_\alpha+\frac{p}{1-p}\frac{\alpha(x_\alpha-y_\alpha)\otimes\alpha(x_\alpha-y_\alpha)}{V_\varepsilon(y_\alpha)}\right)+1\leq-\varepsilon,
\end{split}
\end{equation*}
a contradiction.
\end{proof}

\begin{remark}\label{rem}
\rm
We stress that Lemma \ref{lemma} and Theorem \ref{CP} can be restated for any degenerate elliptic operator $F=F(X)$, which is continuous and positively homogeneous of degree one. 
\end{remark}

\section{Positive solutions of $\Pmk(D^2u)+u^p=0$}\label{Sec3}

The comparison principle, Theorem \ref{CP}, provides the following a priori estimates.

\begin{proposition}\label{stime}
Let $u\in USC(\overline\Omega)$ be a subsolution of 
\begin{equation}\label{eq1}
\left\{
\begin{array}{rl}
\Pmk(D^2u)+u^p=0 & \text{in $\Omega$}\\
u=0 &  \text{on $\partial\Omega$}
\end{array}\right.
\end{equation}
in a bounded domain $\Omega$ such that $\Omega\subseteq B_R$. If $p\in(0,1)$ then
\begin{equation}\label{apriori}
\sup_\Omega u\leq \left(\frac{R^2}{2k}(1-p)\right)^{\frac{1}{1-p}}.
\end{equation}
If $p\geq1$ then $u\equiv0$.
\end{proposition}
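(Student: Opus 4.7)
The plan is to deduce both parts from the comparison principle (Theorem \ref{CP}), plus, for the superlinear range, the maximum principle for $\Pmk(D^2\cdot)+\mu\cdot$ with $\mu>0$ recalled in the Introduction (cf.~\cite{BGI}).

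For (1), I would compare $u$ with the explicit radial function
\[
V_\delta(x)=\left[\frac{1-p}{2k}\bigl((R+\delta)^2-|x-x_0|^2\bigr)\right]^{1/(1-p)},\qquad x\in B_{R+\delta}(x_0),
\]
where $x_0$ is the centre of a ball of radius $R$ containing $\Omega$ and $\delta>0$ is small. A direct calculation---entirely analogous to the one underlying the compactly supported example \eqref{funz u}---shows that $V_\delta$ is a strictly positive classical solution of $\Pmk(D^2 V_\delta)+V_\delta^p=0$ on $B_{R+\delta}(x_0)$: $V_\delta$ is radially decreasing with $V_\delta''(r)\geq V_\delta'(r)/r$, so for $k<N$ and $x\neq x_0$ the $k$ smallest Hessian eigenvalues are the tangential ones and $\Pmk(D^2V_\delta)=kV_\delta'(r)/r$; the equation then reduces to an algebraic identity that fixes the constant $((1-p)/(2k))^{1/(1-p)}$ via $\alpha-1=\alpha p$, with $\alpha=1/(1-p)$. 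Hence $V_\delta$ is a strictly positive supersolution on $\Omega$ with $V_\delta>0=u$ on $\partial\Omega$, Theorem \ref{CP} gives $u\leq V_\delta$ throughout $\overline\Omega$, and since $\sup_\Omega V_\delta\leq V_\delta(x_0)=((R+\delta)^2(1-p)/(2k))^{1/(1-p)}$, letting $\delta\to 0^+$ produces \eqref{apriori}.

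For (2), assume by contradiction that $M:=\sup_\Omega u>0$. Because $0\leq u\leq M$ and $p\geq 1$, pointwise $u^p\leq M^{p-1}u$, so in the viscosity sense
\[
\Pmk(D^2 u)+M^{p-1}u\;\geq\;\Pmk(D^2 u)+u^p\;\geq\;0\quad\text{in }\Omega.
\]
Since $M^{p-1}>0$, the maximum principle for $\Pmk(D^2\cdot)+M^{p-1}\cdot$ applies, and together with $u\leq 0$ on $\partial\Omega$ it forces $u\leq 0$ in $\Omega$, contradicting $M>0$; hence $u\equiv 0$. (A self-contained alternative, avoiding the quoted maximum principle, would be to rescale $\hat u(y)=u(y/b)/M$ with $b=M^{(p-1)/2}$ to a subsolution of $\Pmk(D^2v)+v^p=0$ on $b\Omega\subset B_{bR}$ with $\|\hat u\|_\infty=1$, dominate $\hat u^p$ by $\hat u^q$ for any $q\in(0,1)$, apply (1), and let $q\to 1^-$ to contradict the resulting inequality $M^{p-1}R^2(1-q)/(2k)\geq 1$.)

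The main technical point is the verification, in part (1), that $V_\delta$ actually solves the equation on all of $B_{R+\delta}(x_0)$: one must identify the $k$ smallest Hessian eigenvalues of the radial profile, which in the degenerate range $k<N$ are the tangential ones $V_\delta'(r)/r$ counted $k$ times rather than the radial $V_\delta''$. This is precisely what yields a closed-form radial solution for every $k<N$ and is responsible both for the explicit bound \eqref{apriori} and for the threshold character of the exponent $p=1$.
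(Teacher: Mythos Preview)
Your proof is correct and essentially matches the paper's. For part~(1) the paper uses the same radial barrier $v(x)=\bigl[\tfrac{1-p}{2k}(R^2-|x|^2)\bigr]^{1/(1-p)}$ without the $\delta$-enlargement (since $\Omega\subseteq B_R$ already forces $v>0$ on the open set $\Omega$, which is all Theorem~\ref{CP} requires), but your version is harmless. For part~(2) the paper's primary argument is exactly your parenthetical ``self-contained alternative'' via rescaling and letting $q\to1^-$; your lead argument via the maximum principle for $\Pmk(D^2\cdot)+\mu\cdot$ is also valid and is in fact the route the paper takes later in Section~\ref{Generalizations} (see the discussion before Proposition~\ref{4prop1}), so the two approaches simply appear in reversed order of emphasis.
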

\begin{remark}
\rm A further remarkable difference between the uniformly elliptic case $k=N$ and the degenerate one $k<N$, is the presence of ${(1-p)}^{\frac{1}{1-p}}$ in the estimate \eqref{apriori}. Such estimate shows that, for $p$ close to 1, all possible subsolutions of \eqref{eq1} are very small. This surprising feature trivially fails for the Laplacian. Indeed it is sufficient to consider the ball $B_r$ of radius $r$ such that the principal eigenvalue of $\Delta$ is $\lambda^+_1(B_r)=1$. Then consider the normalized positive eigenfunction $\phi$, in such a way $
\Delta \phi+\phi^p\geq\Delta\phi+\phi=0
$ for any $p\in(0,1)$ and $\phi=0$ on $\partial B_r$, but on the other hand $\sup_{B_r}\phi=1$. 
\end{remark}
\begin{proof}[Proof of Proposition \ref{stime}]
Let $p\in(0,1)$. Since the function $v(x)=\left[\frac{1-p}{2k}\left(R^2-|x|^2\right)\right]^{\frac{1}{1-p}}$ is a positive supersolution of $\Pmk(D^2v)+v^p=0$ in $\Omega$, in fact a classical solution, and $u\leq v$ on $\partial\Omega$, then Theorem \ref{CP} yields $u\leq v$ in $\Omega$. In particular
$$
\sup_\Omega u\leq \left(\frac{R^2}{2k}(1-p)\right)^{\frac{1}{1-p}}\,.
$$ 
Consider now the case $p\geq1$ and assume by contradiction that $u$ is a nontrivial subsolution of \eqref{eq1}. In particular $M=\sup_\Omega u\neq0$. Then the function $v(x)=\frac{1}{M}u\left(M^{\frac{1-p}{2}}x\right)$ is well defined in $\tilde\Omega=M^{\frac{p-1}{2}}\Omega$ and it is a subsolution of \eqref{eq1} in $\tilde\Omega$. Since $\sup_{\tilde\Omega}v=1$, then for any  for any $q<1$ 
\begin{equation*}
\Pmk(D^2v)+v^q(x)\geq\Pmk(D^2v)+v^p(x)\geq0\quad\text{in $\tilde\Omega$}.
\end{equation*}
By  estimate \eqref{apriori} 
\begin{equation}\label{e1}
\sup_{\tilde\Omega}v\leq \left(C(1-q)\right)^{\frac{1}{1-q}},
\end{equation}
 where $C$ is now a constant depending  on $k,\Omega,M$ and $p$. Sending $q\to1$ in \eqref{e1} we get $v\equiv0$. This is in contradiction to $\sup_{\tilde\Omega}v=1$. 
\end{proof}

\begin{proposition}\label{sub/supersol}
Let us consider the Dirichlet problem \eqref{eq1} in the sublinear case $p\in(0,1)$. Then:
\begin{itemize}
	\item[i)] there exists a positive subsolution $\underline u\in{\rm Lip}(\overline\Omega)$. In particular $\underline u=0$ on $\partial\Omega$;
	\item[ii)] if $\Omega\in\CR$, then there exists a positive supersolution $\overline u\in{\rm Lip}(\overline\Omega)$ such that $\overline u=0$ on $\partial\Omega$ and $\overline u\geq\underline u$ in $\overline\Omega$.
\end{itemize}
\end{proposition}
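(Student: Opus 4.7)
\medskip
\noindent\textbf{Proof plan for Proposition \ref{sub/supersol}.}

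For part (i) the natural candidate is the compactly supported bump already exhibited in \eqref{funz u}. Fix any ball $B_r(x_0)\Subset\Omega$ and set
$$
\underline u(x)=\left[\tfrac{1-p}{2k}\bigl(r^2-|x-x_0|^2\bigr)_+\right]^{\frac{1}{1-p}}.
$$
Because $\tfrac{1}{1-p}\geq 1$, this function is Lipschitz on $\overline\Omega$ and vanishes on $\partial\Omega$. The radial computation alluded to after \eqref{funz u} (cf.\ \cite{BGL}) gives $\Pmk(D^2\underline u)+\underline u^p=0$ classically in $B_r(x_0)$: indeed, writing $\underline u(x)=\phi(|x-x_0|)$, the eigenvalues of $D^2\underline u$ are $\phi'/s$ (multiplicity $N-1$) and $\phi''$ (multiplicity $1$) with $\phi''\geq \phi'/s$, so for $k<N$ one has $\Pmk(D^2\underline u)=k\phi'/s=-\underline u^{\,p}$. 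On $\overline\Omega\setminus B_r(x_0)$ the function is identically zero, and at any contact point of a $C^2$ test function $\varphi$ touching $\underline u$ from above one finds $D\varphi=0$, $D^2\varphi\geq 0$, hence $\Pmk(D^2\varphi)+\varphi^p\geq 0$; this takes care of the seam $\partial B_r(x_0)$ and yields the viscosity subsolution property on all of $\Omega$.

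For part (ii) I exploit the representation $\Omega=\bigcap_{y\in Y}B_R(y)$ coming from Definition \ref{convexity}. For each $y\in Y$ set
$$
v_y(x)=\left[\tfrac{1-p}{2k}\bigl(R^2-|x-y|^2\bigr)\right]^{\frac{1}{1-p}},
$$
which by the same radial computation is a classical solution of $\Pmk(D^2v)+v^p=0$ on $B_R(y)\supset\Omega$, hence a viscosity supersolution in $\Omega$. Define
$$
\overline u(x)=\inf_{y\in Y}v_y(x).
$$
The family $\{v_y\}$ is uniformly Lipschitz on $\overline\Omega$ with a constant depending only on $R,p,k$, so $\overline u$ is Lipschitz on $\overline\Omega$; the standard stability of viscosity supersolutions under infima then forces $\overline u$ itself to be a supersolution. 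The boundary condition is immediate: each $x\in\partial\Omega$ lies on some $\partial B_R(y)$ with $y\in Y$, so $v_y(x)=0$ and hence $\overline u(x)=0$.

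Interior positivity of $\overline u$ comes from a geometric observation: for $x\in\Omega$ and $y\in Y$ the radial projection $z=y+R(x-y)/|x-y|$ lies on $\partial B_R(y)$, hence outside $\Omega$, so $R-|x-y|=|x-z|\geq \mathrm{dist}(x,\partial\Omega)$, giving
$$
\overline u(x)\geq \left[\tfrac{1-p}{2k}\,R\,\mathrm{dist}(x,\partial\Omega)\right]^{\frac{1}{1-p}}>0\quad\text{in }\Omega.
$$
To finally obtain $\overline u\geq\underline u$ I rescale: for every $t\in(0,1]$ the function $t\underline u$ remains a subsolution, because $t^p\geq t$ on $(0,1]$ and $\underline u\geq 0$ imply
$$
\Pmk(D^2(t\underline u))+(t\underline u)^p=t\Pmk(D^2\underline u)+t^p\underline u^{\,p}\geq (t^p-t)\underline u^{\,p}\geq 0.
$$
Choosing $t$ small enough that $t\,\sup_{\overline\Omega}\underline u\leq\min_{\overline{B_r(x_0)}}\overline u$ (a positive number by continuity and interior positivity) and replacing $\underline u$ by $t\underline u$ gives $\underline u\leq\overline u$ on all of $\overline\Omega$, since $\underline u$ vanishes outside $B_r(x_0)$.

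The only step that requires genuine care is the viscosity supersolution property of the infimum $\overline u=\inf_{y\in Y}v_y$ when $Y$ is uncountable. This is a standard stability statement relying only on the continuity \eqref{continuity} and ellipticity \eqref{ellipticity} of $\Pmk$: at any $C^2$ test-from-below contact point one selects a minimizing sequence $y_n$, converts the approximate minimum of $v_{y_n}-\varphi$ into an exact minimum via a perturbation $\varphi-\eta|x-x_0|^2$, applies the supersolution inequality for $v_{y_n}$, and passes to the limit first in $n$ and then in $\eta\to 0$. I do not foresee any further obstacle.
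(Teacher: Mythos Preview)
Your construction for part~(i) does not yield a \emph{positive} subsolution in the sense required: the single bump supported in one ball $B_r(x_0)\Subset\Omega$ vanishes identically on $\Omega\setminus B_r(x_0)$, so $\underline u$ is merely nonnegative. The statement asks for $\underline u>0$ throughout $\Omega$, and this is not a cosmetic point: in the subsequent proof of Theorem~\ref{existence} the Perron solution $U$ is trapped between $\underline u$ and $\overline u$, and the strict inequality $\underline u>0$ is what forces $U>0$ in $\Omega$ (recall that $\Pmk$ does not satisfy a strong minimum principle, so positivity of $U$ cannot be recovered a posteriori). Your rescaling $\underline u\mapsto t\underline u$ at the end does not help, since the support is unchanged. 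The paper's remedy is to take, instead of one bump, the supremum of the whole family $\underline v_z$ of bumps centered at every $z\in\Omega$ with radius $\delta_z=\mathrm{dist}(z,\partial\Omega)$; then $\underline u(z)\geq\underline v_z(z)>0$ for each $z$, and a uniform Lipschitz bound on the family keeps $\underline u\in\mathrm{Lip}(\overline\Omega)$.

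Your treatment of part~(ii) is essentially the paper's: the same infimum $\overline u=\inf_{y\in Y}v_y$. Where you differ is in the ordering $\underline u\leq\overline u$. The paper obtains it by a direct geometric inequality showing $\underline v_z\leq v_y$ pointwise (which, with its $\underline u$, simultaneously gives $\overline u>0$). You instead first prove $\overline u>0$ via the distance estimate and then force the ordering by shrinking $\underline u$. That alternative is fine in isolation, but note that once you repair part~(i) by taking the supremum over all bumps, the scaling trick no longer suffices: $\min_{\mathrm{supp}\,\underline u}\overline u=0$ because $\underline u$ is now positive up to $\partial\Omega$. At that point you really need a pointwise comparison such as the paper's $\delta_z^2-|x-z|^2\leq R^2-|x-y|^2$, or else an application of Theorem~\ref{CP} after having secured $\overline u>0$.
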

\begin{proof}
For any $z\in\Omega$, let $\delta_z={\rm dist}(z,\partial\Omega)$ be the distance function to the boundary of $\Omega$ and let $\underline v_z$ be the function defined by 
$$
\underline v_z(x)=\left\{
\begin{array}{cl}
\left[\frac{1-p}{2k}(\delta_z^2-|x-z|^2)\right]^{\frac{1}{1-p}} & \text{if $x\in B_{\delta_z}(z)$}\\
0 & \text{if $x\in \overline\Omega\backslash B_{\delta_z}(z)$}.
\end{array}\right.
$$
By construction $\underline v_z\in C^1(\overline\Omega)$, it is a solution of \eqref{eq1} and $\underline v_z\equiv0$ on $\partial\Omega$. The gradient of $\underline v_z$ can be bounded independently of $z\in\Omega$, say
$$
\left\|D\underline v_z\right\|_{L^\infty(\overline\Omega)}\leq C:=\left(\left(\frac{1-p}{1+p}\right)^{\frac{1+p}{2}}\frac{R^{1+p}}{k}\right)^{\frac{1}{1-p}}
$$
where $R$ is a positive constant such that $\Omega\subseteq B_R$. Hence the family $\left\{\underline v_z\right\}_{z\in\Omega}$ is equi-Lipschitz in $\overline\Omega$ and its supremum $\underline u(x)=\sup_{z\in\Omega}\underline v_z(x)$ is in turn Lipschitz continuous: for any $x_1,x_2\in\overline\Omega$ 
$$
\left|\underline u(x_1)-\underline u(x_2)\right|\leq\sup_{z\in\Omega}\left|\underline v_z(x_1)-\underline v_z(x_2)\right|\leq C|x_1-x_2|.
$$ 
From \cite[Lemma 4.2]{CIL}, $\underline u$ is a subsolution  of the equation in \eqref{eq1}. Moreover for any $z\in\Omega$ 
$$
\underline u(z)\geq\underline v_z(z)=\left(\frac{1-p}{2k}\delta_z^2\right)^{\frac{1}{1-p}},
$$
so the function $\underline u$ is positive in $\Omega$. On the boundary, if $x_0\in\partial\Omega$, then $\underline v_z(x_0)=0$ for any $z\in\Omega$, hence $\underline u(x_0)=0$. This concludes part \emph{i)}.\\
For \emph{ii)} we assume that  $\Omega=\bigcap_{y\in Y}B_R(y)$ where $Y\subseteq\RN$. For any $y\in Y$ we consider the positive solution of \eqref{eq1} in the ball $B_R(y)$, i.e. $v_y(x)=\left[\frac{1-p}{2k}(R^2-|x-y|^2)\right]^{\frac{1}{1-p}}$. Note that such solution is unique by comparison principle, Theorem \ref{CP}. As in part \emph{i)}, the infimum envelope $\overline u(x)=\inf_{y\in Y}v_y(x)$ defines a globally Lipschitz continuous function in $\overline\Omega$ which is a supersolution of the equation in \eqref{eq1}. Moreover if $x_0\in\partial\Omega$, then there exists at least one $y=y(x_0)\in Y$ such that $x_0\in\partial B_R(y)$. So $\overline u(x_0)\leq v_y(x_0)=0$ and $\overline u$ vanishes on the boundary $\partial\Omega$. Now we claim that $\overline u\geq\underline u$. We cannot directly invoke  Theorem \ref{CP} since $\overline u$, which is by definition the infimum of positive function, is a priori only nonnegative in $\Omega$. Fix $x_0\in\Omega$. Firstly  
note that 
\begin{equation}\label{eq2}
\underline u(x_0)=\sup\left\{\underline v_z(x_0):\,z\in\Omega\;\text{and}\;x_0\in B_{\delta_z}(z)\right\}
\end{equation}
since $\underline v_z(x_0)=0$ if $x_0\notin B_{\delta_z}(z)$. Now for any $y\in Y$ and $x_0\in B_{\delta_z}(z)\subseteq\Omega\subseteq B_R(y)$ one has
\begin{equation*}
\begin{split}
\left||x_0-y|^2-|x_0-z|^2\right|&=\left(|x_0-y|+|x_0-z|\right)\left||x_0-y|-|x_0-z|\right|\\
&\leq(R+\delta_z)|y-z|\\&\leq(R+\delta_z)(R-\delta_z)=R^2-\delta_z^2.
\end{split}
\end{equation*}
This implies that $\delta_z^2-|x_0-z|^2\leq R^2-|x_0-y|^2$, hence
$$
\underline v_z(x_0)\leq v_y(x_0).
$$
Since the previous inequality holds true for any $y\in Y$ and any $z\in\Omega$ such that $x_0\in B_{\delta_z}(z)$, we conclude by using \eqref{eq2}. 
\end{proof}

We are in position to prove Theorem \ref{existence}.

\begin{proof}[Proof of Theorem \ref{existence}]
The nonexistence of positive subsolutions in the superlinear case $p\geq1$ is a consequence of Proposition \ref{stime}.\\
Let us assume $p\in(0,1)$. In view Proposition \ref{sub/supersol}, the 
function
$$
U(x)=\sup_{u\in{\mathcal S}}u(x)
$$
where
$${\mathcal S}=\left\{u\in USC(\overline\Omega):\,\text{$u$ is a subsolution of \eqref{Dirichlet} and $\underline u\leq u\leq\overline u$ in $\overline\Omega$}\right\}$$  
is well defined in $\overline\Omega$. Since $\underline u>0$, the comparison principle, Theorem \ref{CP}, applies within ${\mathcal S}$. Then the Perron's method \cite[Theorem 4.1]{CIL}-\cite[Proposition II.1]{IL} guarantees that $U$ is the desired solution. For \eqref{hopf} fix any $x_0\in\partial\Omega$ and choose $y_0\in Y$, depending on $x_0$, such that $x_0\in\partial B_R(y_0)$. Using the definition of $\overline u$, i.e. $\overline u(x)=\inf_{y\in Y}v_y(x)$, one has
\begin{equation*}
\frac{U(x)-U(x_0)}{|x-x_0|^q}\leq\frac{\overline u(x)}{|x-x_0|^q}\leq\frac{v_{y_0}(x)}{|x-x_0|^q}\leq\left((1-p)\frac{R}{k}\right)^{\frac{1}{1-p}}|x-x_0|^{\frac{1}{1-p}-q}
\end{equation*}
and the conclusion follows.
\end{proof}

For any $p\in(0,1)$ let us denote by $U_p$ the unique solution of \eqref{Dirichlet} provided by Theorem \ref{existence}. We are interested in the asymptotic behaviour, as $p\to1$, of the rescaled functions
\begin{equation}\label{eq4}
\tilde U_p(x)=\frac{1}{M_p}U_p\left(M_p^{\frac{1-p}{2}}x\right)\qquad x\in \Omega_p\equiv M_p^{\frac{p-1}{2}}\Omega.
\end{equation}
Here $\displaystyle M_p=\max_{\overline \Omega}U_p$, so that $\displaystyle \max_{\overline \Omega_p}\tilde U_p=1$.\\
In the  case $\Omega\equiv B_1(0)$, the unit ball centered at the origin, we are going to show that $\tilde U_p$ converges locally uniformly to $\tilde U_1(x):=\exp(-\frac{|x|^2}{2k})$ which is a classical solution  of the limit equation 
\begin{equation}\label{eqRN}
\Pmk(D^2u)+u=0\qquad\text{in $\RN$}.
\end{equation}
At this stage it is worth to point out that  $\tilde U_1$ is the unique solution of \eqref{eqRN}, up to  translation and rescaling, within the class of $C^2$-radial functions. This fact is a consequence of the following lemma, see also \cite{BGL} for further properties concerning entire solutions of $\Ppmk(D^2u)+u^p=0$. 
\begin{lemma}\label{uniqradial}
Let $f\in C^1([0,+\infty))$ be  a nonnegative function such that $f'(t)\geq0$ for any $t\geq0$. If $u=u(r)$, $r=|x|$, is a nonnegative $C^2$-solution of 
$\Pmk(D^2u)+f(u)=0$ in $\RN$, then
\begin{equation}\label{eqrad1}
k\frac{u'(r)}{r}+f(u(r))=0\qquad\forall r>0.
\end{equation}  
\end{lemma}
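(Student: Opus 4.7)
The plan is to reduce the PDE to a first-order ODE by exploiting the explicit action of $\Pmk$ on radial functions. For $u\in C^2(\RN)$ radial one has $u'(0)=0$, while at any $x\neq 0$ the Hessian $D^2u(x)$ has eigenvalues $u''(r)$ (simple, along $x/|x|$) and $\omega(r):=u'(r)/r$ (with multiplicity $N-1$). Assuming $k<N$ (which is the substantive case; for $k=N$ the Liouville property for nonnegative superharmonic functions forces $u$ to be constant and the conclusion is trivial), both possible orderings of the eigenvalues can be packaged uniformly: since $u''(r)=\omega(r)+r\omega'(r)$,
$$
\Pmk(D^2u)=\min\bigl(k\omega,\;u''+(k-1)\omega\bigr)=\min\bigl(k\omega,\;k\omega+r\omega'\bigr)=k\omega(r)-r\bigl(\omega'(r)\bigr)^{-},\qquad r>0,
$$
with $(t)^{-}:=\max(0,-t)$. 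Extending $\omega$ continuously to $r=0$ via $\omega(0):=u''(0)$, the equation $\Pmk(D^2u)+f(u)=0$ becomes
$$
r\bigl(\omega'(r)\bigr)^{-}=k\omega(r)+f(u(r))=:h(r),
$$
and the lemma is exactly the assertion $h\equiv 0$.

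Since $(\omega')^{-}\ge 0$, one first records $h\ge 0$. Then I would argue by contradiction, assuming the relatively open set $\{h>0\}\subseteq[0,\infty)$ contains a connected component $(a,b)$. Throughout $(a,b)$ the identity above forces $\omega'(r)=-h(r)/r<0$, so $\omega$ is strictly decreasing. Continuity of $h$, together with evaluation of the PDE at the origin (where $D^2u(0)=u''(0)I$ yields $h(0)=ku''(0)+f(u(0))=0$), gives $h(a)=0$ whether $a>0$ or $a=0$; consequently $\omega(a)=-f(u(a))/k\leq 0$, and by monotonicity $\omega(r)<0$ for every $r\in(a,b)$.

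The key computation is the derivative of the weighted quantity $r^kh(r)$. Differentiating $h=k\omega+f(u)$ and using $\omega'=-h/r$ together with $u'=r\omega$ yields
$$
h'(r)=-\frac{k}{r}h(r)+r\,f'(u(r))\omega(r),
$$
hence
$$
\bigl(r^kh(r)\bigr)'=r^{k-1}\bigl(k h(r)+r h'(r)\bigr)=r^{k+1}f'(u(r))\omega(r)\le 0
$$
on $(a,b)$, because $f'\ge 0$ and $\omega<0$ there. Thus $r^kh$ is nonincreasing on $(a,b)$. Letting $s\to a^{+}$ in $r^kh(r)\le s^kh(s)$ and using $s^kh(s)\to a^kh(a)=0$ forces $r^kh(r)\le 0$ on $(a,b)$, contradicting $h>0$. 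Hence $h\equiv 0$, which is exactly \eqref{eqrad1}.

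The hard part will be the bookkeeping between the two regimes $u''\le\omega$ and $u''>\omega$ in the definition of $\Pmk$: handling them separately is awkward because their common boundary can be intricate and the ODE changes nature across it. Rewriting the operator through the negative part $(\omega')^{-}$ sidesteps this case analysis altogether. A secondary delicate point is the treatment of a component $(a,b)$ with $a=0$; it is resolved by noting that $u\in C^2$ forces $u'(0)=0$, so $D^2u(0)=u''(0)I$ and the equation at the origin directly yields $h(0)=0$, while the weight $r^k$ kills the boundary value of the auxiliary function at $0$.
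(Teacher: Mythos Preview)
Your argument is correct and is, at its core, the paper's own proof in different clothing. The paper introduces $v(r)=u'(r)/r-u''(r)=-r\omega'(r)$ and shows $v\le 0$ by contradiction: on a maximal interval where $v>0$ the equation reads $v=k\,u'/r+f(u)$, and the inequality $(r^kv)'\le 0$ (obtained from $f'\ge 0$ together with the global fact $u'\le 0$) forces $r^kv\le 0$, a contradiction. Your quantity $h=k\omega+f(u)$ coincides with $v$ precisely on $\{h>0\}=\{\omega'<0\}=\{v>0\}$, so your key computation $(r^kh)'=r^{k+1}f'(u)\,\omega\le 0$ is exactly the same differential inequality. Two minor differences are worth recording: (i) writing $\Pmk(D^2u)=k\omega-r(\omega')^{-}$ is a clean way to get $h\ge 0$ without splitting into cases; (ii) rather than appealing to the global inequality $u'\le 0$, you obtain $\omega<0$ on $(a,b)$ from $\omega(a)=-f(u(a))/k\le 0$ and $\omega'<0$, which is slightly more self-contained but equivalent in effect.

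One caveat on your aside about $k=N$: the claim that nonnegative superharmonic functions on $\RN$ are constant is false for $N\ge 3$, and in fact the conclusion of the lemma can fail when $k=N$ (for instance, the standard positive radial solutions of $\Delta u+u^p=0$ with $p\ge\frac{N+2}{N-2}$ do not satisfy $N\,u'/r+u^p=0$). The lemma, like the paper's proof, is really a statement about $k<N$; your main argument already assumes this and is unaffected.
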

\begin{proof}
We claim that
\begin{equation}\label{eqrad2}
\frac{u'(r)}{r}\leq u''(r)\qquad\forall r>0.
\end{equation}
From \eqref{eqrad2} we immediately obtain \eqref{eqrad1} using the definition of $\Pmk$. In order to prove \eqref{eqrad2} let us assume by contradiction that there exists $r_0>0$ such that the function
$$
v(r)=\left\{\begin{array}{cl}
\frac{u'(r)}{r}-u''(r) & \text{if $r>0$}\\
0 & \text{if $r=0$}
\end{array}\right.
$$
is positive at $r_0$. Let $\underline r=\inf\left\{\varrho\in(0,r_0):\;v>0\;\text{in}\;(\varrho,r_0)\right\}$. By continuity the function $v$ is positive on the interval $(\underline r,r_0)$ and $v(\underline r)=0$. Moreover in $(\underline r,r_0)$, the equation $\Pmk(D^2u)+f(u)=0$ can be written as
$$
v(r)=k\frac{u'(r)}{r}+f(u(r)).
$$
Differentiating the  equation above:
$$
v'(r)=k\frac{u''(r)}{r}-k\frac{u'(r)}{r^2}+f'(u(r))u'(r)\leq-\frac kr v,\qquad r\in (\underline r,r_0).
$$
In the last inequality we have used the assumption $f'\geq0$ and the fact that $u'\leq 0$, which is a direct consequence of the inequality $\Pmk(D^2u)\leq0$. Hence  $(r^kv)'\leq0$ in $(\underline r,r_0)$ and $r_0^kv(r_0)\leq\underline r^kv(\underline r)=0$. This contradicts the definition of $r_0$.
\end{proof}

Now we can go back to the asymptotic analysis of \eqref{eq4} in the case $\Omega=B_1$. This is a very simple computation since we know the explicit expression of $U_p$, namely $$U_p(x)=\left[\frac{1-p}{2k}(1-|x|^2)\right]^{\frac{1}{1-p}}.$$So  $\displaystyle M_p=U_p(0)=\left(\frac{1-p}{2k}\right)^{\frac{1}{1-p}}$ and 
$\displaystyle \Omega_p= B_{\sqrt{\frac{2k}{1-p}}}(0)$. When $p\to1$, then 
$
\Omega_p\to\RN
$
and
\begin{equation}
\lim_{p\to1}\tilde U_p(x)=\lim_{p\to1}\left(1-\frac{|x|^2}{2k}(1-p)\right)^{\frac{1}{1-p}}=\exp(-\frac{|x|^2}{2k}),
\end{equation}
the limit being uniform in compact sets of $\RN$. \\
A similar result continues to hold for domains $\Omega_p$ which are, in some sense, small perturbations of $B_{\sqrt{\frac{2k}{1-p}}}(0)$. This is precised in the following proposition.
\begin{proposition}\label{perturbations}
Let $\left\{\Omega_p\right\}_{p\in(0,1)}$ be a family of convex domains belonging to $\CR$, with $R=R(p)$ and $Y=Y(p)$. Denote by $U_p$ be the corresponding solutions of \eqref{Dirichlet} in $\Omega_p$. Let us assume that there exist $\bar x_p,\,\hat x_p\in\RN$ and $\bar\rho_p,\,\hat\rho_p>0$ such that
$$ B_{\sqrt{\frac{2k}{1-p}}-\bar\rho_p}(\bar x_p)\subseteq\Omega_p\subseteq B_{\sqrt{\frac{2k}{1-p}}+\hat\rho_p}(\hat x_p).$$
If 
\begin{equation}\label{eq1prop}
|\bar x_p|+|\hat x_p|\to0\;\quad\text{and}\;\quad\frac{\bar\rho_p+\hat\rho_p}{\sqrt{1-p}}\to0\qquad\text{as $p\to1$},
\end{equation}  
then $\displaystyle \lim_{p\to1}\tilde U_p(x)=\exp(-\frac{|x|^2}{2k})$ locally uniformly in $\RN$.
\end{proposition}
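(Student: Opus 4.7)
The strategy is to sandwich $U_p$ between two explicit radial solutions associated with the inner and outer approximating balls, and then rescale directly. I would carry out the argument in three steps.

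\textbf{Step 1: Sandwich by comparison.} Set $s_p:=\sqrt{2k/(1-p)}$, $\bar r_p:=s_p-\bar\rho_p$, $\hat r_p:=s_p+\hat\rho_p$, and define
\[
\overline U_p(x):=\Bigl[\tfrac{1-p}{2k}\bigl(\hat r_p^{\,2}-|x-\hat x_p|^2\bigr)\Bigr]^{1/(1-p)},\qquad
\underline U_p(x):=\Bigl[\tfrac{1-p}{2k}\bigl(\bar r_p^{\,2}-|x-\bar x_p|^2\bigr)_+\Bigr]^{1/(1-p)}.
\]
These are respectively a classical, strictly positive solution on $B_{\hat r_p}(\hat x_p)\supseteq\Omega_p$, and the subsolution of type \eqref{funz u} supported on $B_{\bar r_p}(\bar x_p)\subseteq\Omega_p$. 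Since $\overline U_p\ge 0=U_p$ on $\partial\Omega_p$ and both $U_p$ and $\overline U_p$ are strictly positive in $\Omega_p$, Theorem \ref{CP} applies in both directions and yields $\underline U_p\le U_p\le \overline U_p$ in $\Omega_p$.

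\textbf{Step 2: Asymptotics of $M_p$ and of $A_p:=M_p^{(1-p)/2}$.} Because $\bar r_p\to\infty$ and $|\bar x_p|+|\hat x_p|\to 0$, the two centers $\bar x_p,\hat x_p$ lie in $\Omega_p$ for $p$ close to $1$. Evaluating the sandwich at these centers gives
\[
(\bar r_p/s_p)^{2/(1-p)}\le M_p\le (\hat r_p/s_p)^{2/(1-p)}.
\]
Using $\log(1\pm\rho/s_p)\sim \pm\rho/s_p$ and $1/(s_p(1-p))=\sqrt{1/(2k(1-p))}$, one sees that
\[
\tfrac{2}{1-p}\log\!\bigl(1\pm\rho_p/s_p\bigr)\sim \pm\,\rho_p\sqrt{2/(k(1-p))}\xrightarrow[p\to 1]{} 0
\]
by the scaling hypothesis \eqref{eq1prop}. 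Hence $M_p\to 1$, and consequently $A_p\to 1$.

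\textbf{Step 3: Passage to the limit.} Fix a compact $K\subset\RN$. Since $A_p\to 1$, $\bar r_p\to\infty$ and $\bar x_p\to 0$, one has $A_px\in B_{\bar r_p}(\bar x_p)\subseteq\Omega_p$ uniformly in $x\in K$ for $p$ close to $1$. Dividing the sandwich by $M_p$ and writing $\tau_p:=M_p^{1-p}\in[(\bar r_p/s_p)^2,(\hat r_p/s_p)^2]$, the upper bound becomes
\[
\frac{\overline U_p(A_p x)}{M_p}=\Bigl[\tau_p^{-1}\bigl((1+\hat\rho_p/s_p)^2-|A_px-\hat x_p|^2/s_p^{\,2}\bigr)\Bigr]^{1/(1-p)}.
\]
Taking $\log/(1-p)$, the correction terms $\log\tau_p/(1-p)$, $\hat\rho_p/(s_p(1-p))$ and $\hat\rho_p^{\,2}/(s_p^{\,2}(1-p))$ all vanish by \eqref{eq1prop}, while
\[
\frac{|A_px-\hat x_p|^2}{s_p^{\,2}(1-p)}=\frac{|A_px-\hat x_p|^2}{2k}\xrightarrow[p\to 1]{} \frac{|x|^2}{2k}
\]
uniformly in $x\in K$. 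Therefore $\overline U_p(A_px)/M_p\to e^{-|x|^2/(2k)}$ uniformly on $K$, and the entirely analogous estimate for $\underline U_p$ gives the matching lower bound.

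The main obstacle is the bookkeeping in Step 3: each first-order perturbation in $\bar\rho_p,\hat\rho_p,\bar x_p,\hat x_p$ is amplified by the exponent $1/(1-p)$, and one has to verify that the sharp hypothesis $(\bar\rho_p+\hat\rho_p)/\sqrt{1-p}\to 0$ is exactly what is needed to neutralise this amplification; the positions $\bar x_p,\hat x_p$ are easier since they only enter through the quadratic term $|A_px-\cdot|^2$, which is already of order $1-p$.
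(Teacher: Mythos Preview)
Your argument is correct and follows the same route as the paper: sandwich $U_p$ between the explicit radial solutions on the inner and outer balls via Theorem~\ref{CP}, and then check that both bounds converge to $e^{-|x|^2/(2k)}$ using \eqref{eq1prop}. The only difference is that you insert the intermediate Step~2 showing $M_p\to1$ (hence $A_p\to1$) before passing to the limit in the rescaled sandwich; the paper's proof is terser and works directly with $U_p$ rather than $\tilde U_p$, implicitly relying on the same fact.
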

\begin{proof}
Let 
\begin{equation*}
\begin{split}
u_p(x)&={\left[\frac{1-p}{2k}\left(\left(\sqrt{\frac{2k}{1-p}}-\bar\rho_p\right)^2-|x-\bar x_p|^2\right)\right]}^{\frac{1}{1-p}}\\
&={\left[1+\frac{1-p}{2k}\left(\bar\rho_p^2-|\bar x_p|^2+2\left\langle x,\bar x_p\right\rangle-|x|^2\right)-2\bar\rho_p\sqrt{\frac{1-p}{2k}}\right]}^{\frac{1}{1-p}}
\end{split}
\end{equation*}
and 
\begin{equation*}
\begin{split}
v_p(x)&={\left[\frac{1-p}{2k}\left(\left(\sqrt{\frac{2k}{1-p}}+\hat\rho_p\right)^2-|x-\hat x_p|^2\right)\right]}^{\frac{1}{1-p}}\\
&={\left[1+\frac{1-p}{2k}\left(\hat\rho_p^2-|\hat x_p|^2+2\left\langle x,\hat x_p\right\rangle-|x|^2\right)+2\hat\rho_p\sqrt{\frac{1-p}{2k}}\right]}^{\frac{1}{1-p}}
\end{split}
\end{equation*}
be  the solutions of \eqref{Dirichlet} in  $\displaystyle  B_{\sqrt{\frac{2k}{1-p}}-\bar\rho_p}(\bar x_p)$ and $\displaystyle B_{\sqrt{\frac{2k}{1-p}}+\hat\rho_p}(\hat x_p)$ respectively. \\
Fix a compact subset $K$ of $\RN$. Since $\displaystyle  B_{\sqrt{\frac{2k}{1-p}}-\bar\rho_p}(\bar x_p)$ tends to $\RN$, then for $p$ close to 1 the comparison principle, Theorem \ref{CP}, yields
$$
u_p(x)\leq U_p(x)\leq v_p(x)\qquad x\in K.
$$  
Using the assumption \eqref{eq1prop}, it is easy to check that both $u_p$ and $v_p$ converge to $\tilde U_1(x)=\exp(-\frac{|x|^2}{2k})$ uniformly in $K$, from which the conclusion follows.

\end{proof}

\section{Positive solutions of $\Ppk(D^2u)+u^p=0$}\label{Sec4}

We deal with the existence of positive solutions of
\begin{equation}\label{DP for Ppk}
\left\{
\begin{array}{rl}
\Ppk(D^2u)+u^p=0 & \text{in $\Omega$}\\
u=0 &  \text{on $\partial\Omega$}.
\end{array}\right.
\end{equation}
As a consequence of the strong minimum  principle (see \cite[Remark 2.6]{BGI}), every supersolution $u$ of \eqref{DP for Ppk} satisfies: either $u\equiv0$ or $u>0$ in $\Omega$, independently of $p$.\\
First we consider the sublinear case $p\in(0,1)$, giving the proof of Theorem \ref{existence2} part (1).

\begin{proof}[Proof of Theorem \ref{existence2}-(1)]
Using the inequality $\Pmk(X)\leq\Ppk(X)$, which holds for any matrix $X\in\SN$, we infer that $U$, the solution of \eqref{Dirichlet},   is a  subsolution of  \eqref{DP for Ppk} vanishing on the boundary $\partial\Omega$.\\
Let $\Omega=\bigcap_{y\in Y}B_R(y)$ and consider for any $y\in Y$  the function $v_y$  defined by 
$$
v_{y}(x)=\tau(R^2-|x-y|^2)\qquad x\in\overline\Omega,
$$
where $\displaystyle\tau=\left(\frac{R^{2p}}{2k}\right)^{\frac{1}{1-p}}$. Since $D^2v_y(x)=-2\tau I$, where $I\in\SN$ is the identity matrix, then for any $x\in\Omega$ we obtain
\begin{equation*}
\Ppk(D^2v_y(x))+v_y^p(x)=-2k\tau+\tau^p(R^2-|x-y|^2)^p\leq\tau^p(R^{2p}-2k\tau^{1-p})=0.
\end{equation*}
Moreover
$$
\sup_{y\in Y}\left\|Dv_y\right\|_{L^\infty(\overline \Omega)}\leq2\tau R.
$$
As in the proof of Proposition \ref{sub/supersol}-ii), the function $\overline  u(x)=\inf_{y\in Y}v_y(x)\in{\rm Lip}(\overline\Omega)$ is a supersolution of \eqref{DP for Ppk} such that $\overline  u\equiv0$ on $\partial\Omega$. As far as the positivity of $\overline u$ is concerned, fix $x_0\in\Omega$ and let $\delta_{x_0}=\text{dist$(x_0,\partial\Omega)$}$. Since $B_{\delta_{x_0}}(x_0)\subseteq B_R(y)$ for any $y\in Y$, then $|y-x_0|\leq R-\delta_{x_0}$ and 
$$
v_y(x_0)=\tau(R+|x_0-y|)(R-|x_0-y|)\geq\tau R\delta_{x_0}.
$$
Taking the infimum over $y\in Y$ we get $\overline u(x_0)\geq\tau R\delta_{x_0}$. \\
In view of comparison principle, Theorem \ref{CP} and Remark \ref{rem}, we obtain the inequality $U\leq\overline u$ in $\overline\Omega$. Hence the Perron's solution
$$
V(x)=\sup\left\{u(x):\,\text{$u$ is a subsolution of \eqref{DP for Ppk} and $U\leq u\leq\overline u$ in $\overline\Omega$}\right\}
$$
provides the unique positive solution of \eqref{DP for Ppk}.\\
Since $U\leq V$ in $\Omega$ by definition, then the difference $V-U$ is nonnegative and  satisfies in the viscosity sense (see \cite[Lemma 3.1]{GV})
$$
0=V^p(x)-U^p(x)+\Ppk(D^2V(x))-\Pmk(D^2U(x))\geq\Ppk(D^2(V-U)(x))\qquad x\in\Omega.
$$
By the strong minimum principle either $U\equiv V$ or $U<V$. We argue by contradiction by assuming that $U\equiv V$. Take $z_0\in\Omega$  and let $x_0\in\partial\Omega$ such that $\delta_{z_0}=\text{dist$(z_0,\partial\Omega)$}=|x_0-z_0|$. Since we are assuming that $U$ is also a positive solution of \eqref{DP for Ppk}, then we can use the Hopf boundary lemma (which holds true for supersolution of $\Ppk$, see \cite{BGI}). Hence if $\nu=\frac{x_0-z_0}{|x_0-z_0|}$ denotes the outer normal to the ball
$B_{\delta_{z_0}}(z_0)$ at $x_0$, then
$$
\liminf_{t\to0^+}\frac{U(x_0-t\nu)-U(x_0)}{t}>0.
$$
But this in in contradiction with \eqref{hopf} and this shows that $U<V$ in $\Omega$. 
\end{proof}

Henceforth we shall assume $p>1$. In this case, the proof of the existence of a positive solution of \eqref{DP for Ppk} consists in the application of the degree theory for compact operators in cones, combined with 
 appropriate a priori bounds. We shall use in particular the following fixed point theorem.
\begin{theorem}[\textbf{\cite[Proposition 2.1 and Remark 2.1]{DFLN}}]\label{fixedpoint}
Let ${\mathcal C}$ be a cone in the Banach space $({\mathcal X},\left\|\cdot\right\|)$ and $\Phi:{\mathcal C}\mapsto{\mathcal C}$ a compact map such that $\Phi(0)=0$. Assume that there exist a compact map $F:\overline   B_R\times[0,\infty)\mapsto{\mathcal C}$, where $B_R=\left\{x\in{\mathcal C\,|\,\left\|x\right\|< R}\right\}$, and numbers $0<r<R$, $T>0$ such that
\begin{itemize}
	\item [(i)]  $x\neq t\Phi(x)$ for $0\leq t\leq1$ and $\left\|x\right\|=r$;
	\item [(ii)] $F(x,0)=\Phi(x)$  and $F(x,t)\neq x$ for $\left\|x\right\|=R$ and $0\leq t<\infty$;
	\item [(iii)] $F(x,t)=x$ has no solution $x\in\overline B_R$ for $t\geq T$.
\end{itemize}
Then $\Phi$ has a fixed point $x\in{\mathcal C}$ such that $r<\left\|x\right\|<R$.
\end{theorem}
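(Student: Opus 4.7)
The natural approach is via the Leray--Schauder fixed point index on the cone $\mathcal{C}$: the three hypotheses of the theorem are tailor-made to compute the index on the small ball $B_r$ and on the large ball $B_R$, after which excision forces a nontrivial fixed point in the open annulus. I would begin by recalling the construction of the index $i(\Psi,B,\mathcal{C})$ for a compact map $\Psi:\overline B\to\mathcal{C}$ with no fixed points on $\partial B$, realized through a Dugundji retraction of $\mathcal{X}$ onto $\mathcal{C}$ and the classical Leray--Schauder degree, together with its three basic properties: normalization (the index of a constant map lying in $B$ equals $1$), homotopy invariance along compact homotopies free of fixed points on $\partial B$, and additivity/excision with respect to the open domain.

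The first computation uses (i). The map $H(x,t)=t\Phi(x)$ is a compact homotopy on $\overline B_r$ joining the zero map to $\Phi$ and, by (i), has no fixed points on $\partial B_r$ for $t\in[0,1]$. Since $\Phi(0)=0\in B_r$, homotopy invariance together with normalization gives
\[
i(\Phi,B_r,\mathcal{C})=i(0,B_r,\mathcal{C})=1.
\]

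The second computation uses (ii) and (iii) in tandem. Restrict $F$ to $\overline B_R\times[0,T]$: this is a compact homotopy with no fixed points on $\partial B_R$ by (ii), so
\[
i(\Phi,B_R,\mathcal{C})=i(F(\cdot,0),B_R,\mathcal{C})=i(F(\cdot,T),B_R,\mathcal{C}),
\]
and by (iii) the right-hand side equals $0$ because $F(\cdot,T)$ has no fixed points in $\overline B_R$ at all. Excision then yields
\[
i(\Phi,B_R\setminus\overline B_r,\mathcal{C})=i(\Phi,B_R,\mathcal{C})-i(\Phi,B_r,\mathcal{C})=-1\neq 0,
\]
so $\Phi$ must have a fixed point in the open annulus $\{r<\|x\|<R\}$; the two boundary spheres are excluded by (i) at $t=1$ and (ii) at $t=0$.

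The only genuinely delicate ingredient is that $\mathcal{C}$ need not be open in $\mathcal{X}$, so the index cannot be defined directly as a Leray--Schauder degree on an open subset of the ambient Banach space. This is the main obstacle: once a fixed point index enjoying normalization, compact-homotopy invariance and excision is available on retracts of $\mathcal{X}$, the entire proof collapses to the three-line computation above, with each hypothesis of the theorem in one-to-one correspondence with one property of the index --- (i) with normalization plus homotopy on $B_r$, (ii) with homotopy invariance on $B_R$, and (iii) with the vanishing of the index of a fixed-point-free map.
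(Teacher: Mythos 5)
The paper does not prove this theorem; it is quoted verbatim from de Figueiredo--Lions--Nussbaum \cite{DFLN}, whose proof is precisely the cone fixed-point-index computation you carry out: $i(\Phi,B_r,\mathcal{C})=1$ by (i) via homotopy to the zero map and normalization, $i(\Phi,B_R,\mathcal{C})=0$ by the homotopy $F(\cdot,t)$ on $[0,T]$ together with (ii)--(iii), and additivity/excision (the absence of fixed points on $\partial B_r\cup\partial B_R$ being ensured by (i) at $t=1$ and (ii) at $t=0$) gives $i(\Phi,B_R\setminus\overline B_r,\mathcal{C})=-1\neq0$. Your argument is correct and matches the standard proof in the cited reference, including the observation that the index must be taken on the retract $\mathcal{C}$ rather than as a raw Leray--Schauder degree on an open subset of $\mathcal{X}$.
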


In our case let  ${\mathcal C}$ be the closed cone in the Banach space ${\mathcal X}=C(\overline\Omega)$, defined by
$${\mathcal C}=\left\{v\in C(\overline \Omega) \,|\,\text{$v\geq0$ in $\overline\Omega$ and $v=0$ on $\overline\Omega$}\right\}.$$  
We consider the map $\Phi_k:{\mathcal C}\mapsto {\mathcal C}$, where for $v\in{\mathcal C}$, $u(x)=\Phi_k(v)(x)$ is the  unique solution of
\begin{equation}\label{DPT}
\left\{
\begin{array}{rl}
\Ppk(D^2u)+v^p=0 & \text{in $\Omega$}\\
u=0 &  \text{on $\partial\Omega$}.
\end{array}\right.
\end{equation}
The existence issue of a positive solution of \eqref{DP for Ppk} is equivalent to find  a fixed point of $\Phi_k$.

Using Propositions \ref{MP} and \ref{exis}, the map  $\Phi_k$ is well defined and such that $\Phi_k(0)=0$.  Moreover if $v_n\to v$ in ${\mathcal C}$ and $u_n=\Phi_k(v_n)$, then
\begin{equation*}
\left\{
\begin{array}{rl}
\Ppk(D^2|u_n-u|)\geq-\left|v^p_n(x)-v^p(x)\right| & \text{in $\Omega$}\\
u_n-u=0 &  \text{on $\partial\Omega$}.
\end{array}\right.
\end{equation*}
Using the estimate \eqref{stima} we obtain
$$
\left\|\Phi_k(v_n)-\Phi_k(v)\right\|_{L^\infty(\overline\Omega)}\leq C\left\|v^p_n-v^p\right\|_{L^\infty(\overline\Omega)},
$$
from which the continuity of $\Phi_k$ follows. \\
In order to have the compactness of $\Phi_k$ we shall assume from now on that $k=1$. Setting $\Phi_1=\Phi$, in view of \cite[Proposition 3.1]{BGI},  $\Phi$ maps bounded subsets of ${\mathcal C}$ in bounded subsets of ${\rm Lip}(\overline\Omega)$, which are precompact in ${\mathcal C}$ by Ascoli theorem.\\
Now we define  $F(v,t)$, for any $v\in{\mathcal C}$ and any $t\geq0$, as the unique solution $u(x)$ of 
\begin{equation}\label{DPT2}
\left\{
\begin{array}{rl}
\Ppo(D^2u)+{(v+t)}^p=0 & \text{in $\Omega$}\\
u=0 &  \text{on $\partial\Omega$}.
\end{array}\right.
\end{equation}
By the same argument as before, $F$ is a compact operator in $\mathcal C\times[0,\infty)$. Moreover by the definition, $F(\cdot,0)\equiv\Phi(\cdot)$.\\
We  conclude the proof of Theorem \ref{existence2}.

\begin{theorem}\label{theoremP1}
Let $\Omega\in\CR$ and let $p>1$. Then there exists a positive solution of
\begin{equation}\label{DP for Pp1}
\left\{
\begin{array}{rl}
\Ppo(D^2u)+u^p=0 & \text{in $\Omega$}\\
u=0 &  \text{on $\partial\Omega$}.
\end{array}\right.
\end{equation}
\end{theorem}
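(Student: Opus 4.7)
The plan is to apply the fixed point theorem \ref{fixedpoint} to the map $\Phi$ and the homotopy $F(\cdot,\cdot)$ already introduced, with Banach space $\mathcal X=C(\overline\Omega)$ and cone $\mathcal C$. Both $\Phi$ and $F$ are compact on bounded subsets of $\mathcal C$ (thanks, for $k=1$, to the H\"older estimates of \cite{BGI,OS}), $F(\cdot,0)\equiv\Phi(\cdot)$, and $\Phi(0)=0$; it remains only to verify conditions (i)--(iii).

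For (i), I would suppose $v\in\mathcal C$ satisfies $v=t\Phi(v)$ with $t\in[0,1]$ and $\|v\|_\infty=r>0$. Necessarily $t>0$, and the 1-homogeneity of $\Ppo$ turns this equation into $\Ppo(D^2v)+tv^p=0$ in $\Omega$ with $v=0$ on $\partial\Omega$. Applying Proposition \ref{MP} gives $r=\|v\|_\infty\le Ct\|v\|_\infty^p\le Cr^p$, so $r\ge C^{-1/(p-1)}$, and choosing $r$ strictly below this threshold establishes (i).

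For (iii), any fixed point $v=F(v,t)\in\mathcal C$ is nonnegative, hence $\Ppo(D^2v)\le-(v+t)^p\le -t^p$ in $\Omega$ with $v=0$ on $\partial\Omega$. Let $w_1$ denote the unique solution (Proposition \ref{exis}) of $\Ppo(D^2w_1)=-1$ in $\Omega$, $w_1=0$ on $\partial\Omega$; it is strictly positive in $\Omega$ by the strong minimum principle for $\Ppo$-supersolutions recalled in the Introduction. By 1-homogeneity $t^pw_1$ solves the analogous problem with $-t^p$ on the right, and the viscosity comparison principle for the proper equation $\Ppo(D^2u)=-t^p$ gives $v\ge t^pw_1$. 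Consequently $\|v\|_\infty\ge t^p\|w_1\|_\infty$, which exceeds any prescribed $R$ as soon as $t\ge T:=(R/\|w_1\|_\infty)^{1/p}$, proving (iii).

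The main obstacle is (ii), where I would need a uniform a priori bound for nonnegative viscosity solutions of
\[
\Ppo(D^2v)+(v+t)^p=0\quad\text{in }\Omega,\quad v=0\text{ on }\partial\Omega,
\]
for $t$ in a bounded range $[0,T]$ (the case $t\ge T$ being already handled by (iii)). My plan is to run the Gidas--Spruck blow-up argument in the spirit of Quaas--Sirakov \cite{QS}: assume for contradiction $M_n=\|v_n\|_\infty=v_n(x_n)\to\infty$, rescale by $\lambda_n=M_n^{-(p-1)/2}$, and set $\tilde v_n(y)=v_n(x_n+\lambda_n y)/M_n$. The 1-homogeneity of $\Ppo$ then gives
\[
\Ppo(D^2\tilde v_n)+(\tilde v_n+s_n)^p=0\quad\text{in }\Omega_n=\lambda_n^{-1}(\Omega-x_n),
\]
with $s_n=t_n/M_n\to 0$ (since $t_n\le T$), $\tilde v_n(0)=1=\max\tilde v_n$, and $\tilde v_n=0$ on $\partial\Omega_n$. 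The H\"older estimates specific to $k=1$ from \cite{BGI,OS} yield local equicontinuity, so a subsequence converges locally uniformly to a bounded, nontrivial, nonnegative viscosity solution of $\Ppo(D^2u)+u^p=0$ either on the whole $\RN$ or on a halfspace with zero Dirichlet datum, depending on whether $\lambda_n^{-1}\mathrm{dist}(x_n,\partial\Omega)$ diverges or stays bounded. Both alternatives contradict the Liouville-type nonexistence results of \cite{BGL}, which for $k=1$ hold for every $p>1$. With the resulting bound $C_*=C_*(T)$, choosing $R$ and $T$ consistently so that $R>C_*(T)$ and $T^p\|w_1\|_\infty>R$ (compatible because the blow-up bound grows no faster than linearly in $T$ while $T^p$ grows strictly faster as $p>1$) simultaneously guarantees (ii) and (iii), and Theorem \ref{fixedpoint} delivers a fixed point of $\Phi$ with $r<\|v\|_\infty<R$, i.e.\ the desired positive solution of \eqref{DP for Pp1}.
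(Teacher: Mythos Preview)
Your overall strategy matches the paper's: apply Theorem~\ref{fixedpoint} to $\Phi$ and the homotopy $F$, and verify (i)--(iii) via a priori estimates, the blow-up technique, and the Liouville theorems of \cite{BGL}. The blow-up argument for (ii) is essentially identical to the paper's.

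Where you differ is in (i) and (iii). The paper uses the principal eigenvalue $\mu^+_1$ of $\Ppo$ (see \eqref{princpaleigenvalue}) for both: for (i), a small positive subsolution of $\Ppo(D^2v)+tv^p\ge 0$ would satisfy $\Ppo(D^2v)+\varepsilon v\ge 0$ with $\varepsilon<\mu^+_1$, violating the maximum principle \eqref{MaxPrin}; for (iii), any fixed point with $t\ge T>(\mu^+_1)^{1/(p-1)}$ gives $\Ppo(D^2(v+t))+T^{p-1}(v+t)\le 0$, contradicting the very definition of $\mu^+_1$. The advantage is that $T$ is fixed \emph{independently of $R$}, so (ii) reduces to a single a priori bound on $[0,T)$ and $R$ is simply chosen afterward. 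Your route---the estimate of Proposition~\ref{MP} for (i), and comparison with $t^p w_1$ for (iii)---is more elementary and avoids the eigenvalue theory, but it creates a circular dependence: $T$ depends on $R$, while the bound $C_*(T)$ in (ii) depends on $T$.

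You resolve this by asserting that $C_*(T)$ grows at most linearly in $T$, so that $T^p\|w_1\|_\infty>R>C_*(T)$ is achievable for large $T$. This claim is correct but is not delivered by the blow-up argument as you have written it: fixing $T$ and arguing by contradiction yields \emph{some} bound, with no quantitative dependence on $T$. To justify linear growth you should instead suppose $\|v_n\|_\infty/(1+t_n)\to\infty$; then $M_n=\|v_n\|_\infty\to\infty$ and $s_n=t_n/M_n\to 0$ automatically, so the rescaled limit equation is unchanged and the same Liouville contradiction applies, giving $\|v\|_\infty\le C_0(1+t)$ uniformly in $t\ge 0$. With that small refinement your argument is complete; alternatively, adopting the paper's eigenvalue approach for (iii) eliminates the circularity altogether.
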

\begin{proof}
We are going to prove that the assumptions of Theorem \ref{fixedpoint} are satisfied. \
We start by (i). For this let us  consider the positive first eigenvalue $\mu^+_1$ of $\Ppo$ associated to a positive eigenfunction,  defined by
\begin{equation}\label{princpaleigenvalue}
\mu^+_1=\sup\left\{\mu>0\,|\,\text{$\exists v\in LSC(\Omega)$, $v>0$ and $\Ppo(D^2v)+\mu v\leq0$ in $\Omega$}\right\}.
\end{equation}
It gives a threshold for the validity of the maximum principle (see \cite{BGI}), i.e. for  any $\mu<\mu^+_1$
\begin{equation}\label{MaxPrin}
\text{$\Ppo(D^2u)+\mu u\geq0$ in $\Omega$, \;$u\leq0$ on $\partial\Omega\;\;\Longrightarrow\;\;u\leq0$ in $\Omega$.}
\end{equation}
We show that there exists $r>0$ small enough such that there are no positive subsolutions of
\begin{equation*}
\left\{
\begin{array}{rl}
\Ppo(D^2v)+tv^p=0 & \text{in $\Omega$}\\
v=0 &  \text{on $\partial\Omega$},
\end{array}\right.
\end{equation*}
$t\in[0,1]$, with $\left\|v\right\|_{L^\infty(\Omega)}\leq r$. This in particular implies that the equation  $v=t\Phi(v)$ does not admit positive solutions $v$ with $\left\|v\right\|_{L^\infty(\Omega)}=r$, i.e. condition (i). We argue by contradiction by assuming that there exists a sequence $t_n\in[0,1]$ and  a sequence of positive function $v_n\in USC(\overline\Omega)$ such that $\left\|v_n\right\|_{L^\infty(\Omega)}\to0$ and
$$
\Ppo(D^2v_n)+t_n v_n^p\geq0\quad\text{in $\Omega$},\qquad v_n=0\quad\text{on $\partial\Omega$}.
$$
For $\varepsilon<\mu_1^+$, we can pick $n$ large enough so that $v_n>0$ in $\Omega$ and
$$
0\leq\Ppo(D^2v_n)+t_n v_n^p\leq\Ppo(D^2v_n)+t_n \left\|v_n\right\|_{L^\infty(\Omega)}^{p-1}v_n\leq \Ppo(D^2v_n)+\varepsilon v_n,
$$
but this contradicts \eqref{MaxPrin}.

 Now we focus on condition (iii). Let $T$ be such that  $T>\left(\mu_1^+\right)^\frac{1}{p-1}$. If, for some $t\geq T$, $v$ would a nonnegative supersolution of $\Ppo(D^2v)+(v+t)^p=0$, then
$$
\Ppo(D^2(v+t))+T^{p-1}(v+t)\leq\Ppo(D^2v)+(v+t)^p\leq0
$$
and this contradicts the definition \eqref{princpaleigenvalue} of $\mu_1^+$. This means that for every nonnegative supersolution of  $\Ppo(D^2v)+(v+t)^p=0$ in $\Omega$, then $0\leq t< T$. This in particular implies (iii) and condition (ii) for $t\geq T$.

It remains to show condition (ii) for $t\in[0,T)$. This will be accomplished by showing that for $R$ large enough, then all possible positive solutions of 
\begin{equation*}
\left\{
\begin{array}{rl}
\Ppo(D^2u)+{(u+t)}^p=0 & \text{in $\Omega$}\\
u=0 &  \text{on $\partial\Omega$},
\end{array}\right.
\end{equation*}
with $0\leq t<T$, satisfy the a priori bound $\left\|u\right\|_{L^\infty(\Omega)}<R$. If this is not the case, then there exists $\left\{(u_n,t_n)\right\}\subset{\mathcal C}\times[0,T)$ such that  $$\Ppo(D^2u_n)+(u_n+t_n)^p=0\;\; \text{in}\;\; \Omega\quad \text{and}\quad\left\|u_n\right\|_{L^\infty(\Omega)}\to\infty.$$ Set $M_n=\max_{\overline\Omega}u_n=u_n(x_n)$, $x_n\in\Omega$ and 
$$
\tilde u_n(x)=\frac{1}{M_n}u_n\left(x_n+M_n^{\frac{1-p}{2}}x\right)\qquad x\in \Omega_n\equiv{M_n}^{\frac{p-1}{2}}(\Omega-\left\{x_n\right\}).
$$
By scaling invariance, $\tilde u_n$ is a solution of 
\begin{equation}\label{eqinfty}
\Ppo(D^2\tilde u_n)=-\left(\tilde
u_n+\frac{t_n}{M_n}\right)^p\qquad\text{in \;$\Omega_n$},
\end{equation}
with right hand side uniformly bounded in $L^\infty$. It is standard to see that $\Omega_n$ tends as $n\to\infty$ to $\RN$ or to the half spaces $\partial\RN_+$ (up to a orthogonal transformation). Using the regularity result of \cite[Proposition 3.1]{BGI} and a diagonal argument, there exists a subsequence of $\tilde u_n$, still denoted by $\tilde u_n$, converging locally uniformly  to a function $U$. Passing to the limit into the equation \eqref{eqinfty} we get that $U$ is a solution of 
$$
\Ppo(D^2U)+U^p=0\qquad\text{in $\RN$}
$$
or of 
\begin{equation*}
\left\{
\begin{array}{rl}
\Ppo(D^2U)+{U}^p=0 & \text{in $\RN_+$}\\
U=0 &  \text{on $\partial\RN_+$}.
\end{array}\right.
\end{equation*}
On the other hand the Liouville type results \cite[Theorems 1.1,1.3,1.5]{BGL} imply in both cases that $U\equiv0$, but this is in contradiction to $\left\|U\right\|_{L^\infty}=1$.
\end{proof}

\section{Some generalization}\label{Generalizations}

In this section we  deal with the class of one homogeneous elliptic operators for which $\Pmk$ and $\Ppk$ are respectively the minimal and maximal ones, with the aim to extend the existence results in the sublinear case to this larger class of operators. We shall also give some generalization in the superlinear case.\\
Therefore we consider the boundary value problem
\begin{equation}\label{4eq1}
\left\{
\begin{array}{rl}
u>0,\quad F(D^2u)+a(x)u^p=0 & \text{in $\Omega$}\\
u=0 & \text{on $\partial\Omega$}
\end{array}\right.
\end{equation}
under the following assumptions:
\begin{itemize}
	\item[(H1)] $F:\SN\mapsto\R$ is a continuous degenerate elliptic operator, positively homogeneous of degree 1  such that 
	\begin{equation}\label{4eq2}
	\Pmk(X)\leq F(X)\leq\Ppk(X)\qquad\forall X,Y\in\SN
	\end{equation}
	for some  $k\in[1,N]$;
	\item[(H2)] $a\in C(\Omega)$ is bounded between two positive constants. 
\end{itemize}

\medskip

\noindent
\textbf{Examples of $F$} 
 
\medskip
\noindent
\textbf{1.} Convex combinations of eigenvalues:
$$
F(X)=\sum_{i=1}^N\alpha_i\lambda_i(X)\qquad\text{with}\qquad \sum_{i=1}^N\alpha_i=1\quad\text{and}\quad \alpha_i\geq0
$$
satisfy (H1) with $k=1$. As particular cases:
\begin{center}
\begin{enumerate}
	\item[\textbf{i)}] $\displaystyle\alpha_i=\frac{1}{N}\quad\forall i=1,\ldots,N \;\Longrightarrow\; F(D^2u)=\frac1N\Delta u;$
	\item[\textbf{ii)}] $\displaystyle\alpha_i=1,\;\alpha_j=0\;\;\text{for}\;\;j\neq i\;\Longrightarrow\;  F(D^2u)=\lambda_i(D^2u).$
\end{enumerate}
\end{center}
\textbf{2.} $k$-sums of eigenvalues (not necessarily consecutive eigenvalues):
\begin{equation}\label{4eq3}
F(X)=\sum_{i=1}^k\lambda_{j_i}(X)
\end{equation}
where $1\leq j_1<\ldots<j_k\leq N$ are integer numbers.\\
Note that the extremal operators $\Pmk$ and $\Ppk$ correspond to $j_i=i$ and $j_i=N-k+i$ respectively. \\
If $k=N-1$, then \eqref{4eq3} can be written as
	$$
	F(X)=\tr(X)-\lambda_i(X)
	$$
	for a certain $i\in\left\{1,\ldots,N\right\}$, so that $F(D^2u)$ is some sort or \lq\lq quasi-Laplacian\rq\rq.

\medskip
\noindent\textbf{3.} Sup/inf operators:	
$$
F(X)=\sup_{\alpha}F_\alpha(X), \qquad F(X)=\inf_\alpha F_\alpha(X)
$$
where $\left\{F_\alpha\right\}_{\alpha}$ is a one-parameter family of elliptic operators satisfying \eqref{4eq2}. More general one could consider families of operators depending on two or more parameters.\\ Just to give an explicit example, consider
$$
F(X)=\max\left\{\lambda_1(X)+\lambda_4(X),\lambda_2(X)+\lambda_3(X)\right\}\qquad X\in{\mathcal S}^4,
$$
which fulfill the assumption (H1) with $k=2$.

\begin{theorem}
Let $\Omega\in\CR$ and assume (H1)-(H2). If $p\in(0,1)$ then there exist a unique solution of \eqref{4eq1}.
\end{theorem}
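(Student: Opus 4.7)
The plan is to rerun the Perron/barrier arguments of Theorem \ref{existence}(2) and Theorem \ref{existence2}(1), using the sandwich $\Pmk\leq F\leq\Ppk$ in place of the extremal operators and absorbing $a(x)$ into the constants via $a_*=\inf_\Omega a>0$ and $a^*=\sup_\Omega a<\infty$. Since $F$ is continuous, positively $1$-homogeneous and degenerate elliptic, Remark \ref{rem} makes the comparison principle of Theorem \ref{CP} available for $F$; a routine tracking of the coefficient shows that the same doubling-of-variables proof survives the variable $a(x)$ provided $a_*>0$, with the rescaling $V_\varepsilon=(1+\varepsilon)V$ producing a strict supersolution gain of $-\varepsilon a(x)\leq-\varepsilon a_*<0$ in the transformed equation.

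For the subsolution, mimicking Proposition \ref{sub/supersol}(i), for each $z\in\Omega$ with $\delta_z=\mathrm{dist}(z,\partial\Omega)$ I would set
$$
\underline v_z(x)=A\bigl(\delta_z^2-|x-z|^2\bigr)_+^{1/(1-p)}, \qquad A=\left(\frac{(1-p)a_*}{2k}\right)^{1/(1-p)}.
$$
A direct eigenvalue computation inside $B_{\delta_z}(z)$ gives $\Pmk(D^2\underline v_z)+a(x)\underline v_z^p\geq 0$ (by the choice of $A$ and the inequality $a(x)\geq a_*$), and the hypothesis $F\geq\Pmk$ upgrades this to $F(D^2\underline v_z)+a(x)\underline v_z^p\geq 0$; the usual viscosity argument makes the extension by zero a global subsolution in $\Omega$. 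For the supersolution, following the strategy of Theorem \ref{existence2}(1) and writing $\Omega=\bigcap_{y\in Y}B_R(y)$, I would take
$$
v_y(x)=\tau\bigl(R^2-|x-y|^2\bigr), \qquad \tau=\left(\frac{a^*R^{2p}}{2k}\right)^{1/(1-p)}.
$$
Since $D^2v_y=-2\tau I$ has all its eigenvalues equal to $-2\tau$, one has $F(D^2v_y)\leq\Ppk(D^2v_y)=-2k\tau$, and hence $F(D^2v_y)+a(x)v_y^p(x)\leq-2k\tau+a^*\tau^pR^{2p}=0$ by the choice of $\tau$. The envelopes $\underline u=\sup_z\underline v_z$ and $\overline u=\inf_y v_y$ are therefore Lipschitz sub and supersolutions of \eqref{4eq1} vanishing on $\partial\Omega$, with $\underline u(z)\geq A\delta_z^{2/(1-p)}>0$ and $\overline u(x)\geq\tau R\,\mathrm{dist}(x,\partial\Omega)>0$ in $\Omega$.

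The extended comparison principle, applied to the positive supersolution $\overline u$, then yields $\underline u\leq\overline u$ in $\overline\Omega$, and the Perron method over
$$
\mathcal{S}=\{u\in USC(\overline\Omega)\,:\, u\text{ is a subsolution of }\eqref{4eq1},\;\underline u\leq u\leq\overline u\}
$$
produces a solution $u\in C(\overline\Omega)$ with $u>0$ in $\Omega$ and $u=0$ on $\partial\Omega$; uniqueness is immediate from a second application of comparison between two positive solutions. The main (and still rather mild) obstacle is this extension of Theorem \ref{CP} to the variable coefficient $a(x)$: via $U=u^{1-p}/(1-p)$, Lemma \ref{lemma} transforms \eqref{4eq1} into $F(UD^2U+\frac{p}{1-p}DU\otimes DU)+a(x)U=0$, and by $1$-homogeneity of $F$ the rescaling $V_\varepsilon=(1+\varepsilon)V$ still produces a strict inequality with defect $\varepsilon a_*>0$; the positive lower bound on $a$ is used exactly at this point, and is the only place where (H2) enters nontrivially.
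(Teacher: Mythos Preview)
Your proof is correct and follows essentially the same route as the paper: build positive sub and supersolutions for \eqref{4eq1} by exploiting $\Pmk\leq F\leq\Ppk$ together with the bounds $a_*\leq a(x)\leq a^*$, then invoke the comparison principle (extended via Remark~\ref{rem}, with the continuity of $a$ handling the $a(x_\alpha)-a(y_\alpha)$ discrepancy in the doubling argument) and Perron's method. The only cosmetic difference is that the paper recycles the solutions $U$ and $V$ already obtained in Theorems~\ref{existence} and~\ref{existence2}, rescaling them by $\underline a^{1/(1-p)}$ and $\overline a^{1/(1-p)}$, whereas you rebuild the explicit barrier families $\underline v_z$ and $v_y$ from scratch with the constants baked in; the two constructions coincide up to this rescaling.
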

\begin{proof}[Sketch of the proof.] Let $U$ and $V$ be respectively the functions obtained in Theorems \ref{existence}-\ref{existence2}. By the assumption (H2)  
$$
0<\underline a:=\inf_{x\in\Omega} a(x)\leq\overline a:=\sup_{x\in\Omega}a(x)<\infty.
$$
Hence, using \eqref{4eq2}, we get that $u(x)=\underline a^{\frac{1}{1-p}}U$ and $v(x)=\overline a^{\frac{1}{1-p}}V$ are respectively  positive sub and supersolution of  $$F(D^2u)+a(x)u^p=0\qquad\text{in $\Omega$}$$ which are null on $\partial\Omega$. Since the comparison principle also applies  for \eqref{4eq1} (see Remark \ref{rem} and use the continuity of $a(x)$), then the existence of the unique positive solution is a consequence of Perron's method.
\end{proof}

\begin{remark}
\rm 
In the particular case $F(X)=\frac{\tr X}{N}$ and $a(x)=\frac1N$, then the previous theorem provides the existence and uniqueness of a unique positive viscosity solution of $\Delta u+u^p=0$. A variational approach for such problem (with more general zero order terms) has been addressed in Brezis-Oswald
 \cite{BO}
\end{remark}

 One of the crucial difference between $\Pmk$ and $\Ppk$ is expressed by the reversed Hopf property \eqref{hopf} of Theorem \ref{existence}. It relies on the fact that, for 
\begin{equation*}
v(x)=\left[\frac{1-p}{2k}(R^2-|x-x_0|^2)\right]^{\frac{1}{1-p}}\qquad x_0\in\partial\Omega,
\end{equation*}
 the largest eigenvalue $\lambda_N(D^2v)$ of the Hessian is neglected  by the operator $\Pmk$ by definition, conversely it appears in the expression of $\Ppk(D^2v)$. This  implies that $v$ is a solution of $\Pmk(D^2v)+v^p=0$ in $\Omega$, while it is not for $\Ppk$.\\
In this spirit, using a comparison argument, it is easy to see that the unique positive solution $u$ of \eqref{4eq1} still satisfies 
$$
\lim_{x\to x_0}\frac{u(x)-u(x_0)}{|x-x_0|^q}=0\qquad\text{for $q<\frac{1}{1-p}$}
$$
provided the function $v$ (multiplied by a  positive constant depending on $F$ and $a(x)$) is a solution of the inequality
\begin{equation}\label{4eq4}
F(D^2v)+a(x)v^p\leq0\qquad\text{in $\Omega$}.
\end{equation}
See Remark \ref{remark end} below for some examples.

\bigskip

We now consider $p\geq1$. As showed in the  Section \ref{Sec3},  the nonexistence of subsolutions of \eqref{4eq1} in the case $F\equiv \Pmk$ is a consequence of the a priori bound \eqref{apriori}. An alternative way to prove this fact is the following: first observe that for any nonnegative subsolution of $\Pmk(D^2u)+a(x)u^p=0$ one has
$$
\Pmk(D^2u)+\left\|au^{p-1}\right\|_{L^\infty(\Omega)}u\geq0\qquad\text{in $\Omega$};
$$
then in \cite[Proposition 4.3]{BGI} it has been proved that 
$$
\sup\left\{\mu>0\,|\,\text{$\exists v\in LSC(\Omega)$, $v>0$ and $\Pmk(D^2v)+\mu v\leq0$ in $\Omega$}\right\}=\infty.
$$
This implies that for any $\mu\in\R$ the operator $\Pmk(D^2\cdot)+\mu\cdot$ satisfies the  maximum principle (\cite[Theorem 4.1]{BGI}), hence
$$
u\leq0\quad\text{on $\partial\Omega$}\quad\Longrightarrow\quad u\equiv 0\quad\text{in $\overline\Omega$}.
$$
More in general
\begin{proposition}\label{4prop1}
Let $\Omega$ be a bounded domain and assume (H1)-(H2). If
\begin{equation}\label{4eq5}
\sup\left\{\mu>0\,|\,\text{$\exists v\in LSC(\Omega)$, $v>0$ and $F(D^2v)+\mu v\leq0$ in $\Omega$}\right\}=\infty
\end{equation}
then there are no subsolutions of \eqref{4eq1}.
\end{proposition}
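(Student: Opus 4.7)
The plan is to argue by contradiction, in the spirit of the discussion given just before the statement for the particular case $F\equiv\Pmk$: absorb the superlinear term $a(x)u^p$ into a constant-coefficient linear zero order term, and then invoke the maximum principle that hypothesis \eqref{4eq5} guarantees.

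Suppose, for contradiction, that $u\in USC(\overline\Omega)$ is a nontrivial nonnegative subsolution of \eqref{4eq1}. By compactness of $\overline\Omega$ and upper semicontinuity, $M:=\sup_{\overline\Omega}u$ is finite, and $M>0$ because $u\not\equiv0$. Using $p\geq 1$, $u\geq 0$ and the upper bound $\overline a:=\sup_\Omega a<\infty$ from (H2), we have the pointwise inequality $a(x)u^p(x)\leq\overline a\,M^{p-1}\,u(x)$, hence
$$F(D^2u)+\mu_0\, u\,\geq\, F(D^2u)+a(x)u^p\,\geq\, 0\quad\text{in }\Omega,\qquad u\leq 0\;\text{on }\partial\Omega,$$
with $\mu_0:=\overline a\, M^{p-1}>0$. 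Thus $u$ is a subsolution of the linear homogeneous equation $F(D^2w)+\mu_0 w=0$ with nonpositive boundary values.

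Now, condition \eqref{4eq5} states precisely that the generalized principal eigenvalue of $F$ (\`a la Berestycki--Nirenberg--Varadhan) is $+\infty$. As noted in the text preceding the statement for the operators $\Ppmk$ in \cite[Theorem 4.1]{BGI}, this implies that the operator $F(D^2\cdot)+\mu\cdot$ satisfies the maximum principle for every real $\mu$: any $w\in USC(\overline\Omega)$ with $F(D^2w)+\mu w\geq 0$ in $\Omega$ and $w\leq 0$ on $\partial\Omega$ must be nonpositive in $\overline\Omega$. Applied to $u$ with $\mu=\mu_0$, this forces $u\leq 0$ in $\Omega$; combined with $u\geq 0$ we obtain $u\equiv0$, contradicting the nontriviality of $u$.

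The main obstacle lies in the last step, namely transferring the maximum principle from $\Ppmk$ to a general $F$ satisfying (H1). The proof in \cite{BGI} uses only the degenerate ellipticity \eqref{ellipticity}, continuity, and positive $1$-homogeneity of the operator, so a verbatim adaptation is expected to work; nevertheless one has to verify carefully that no specific structural property of $\Ppmk$ beyond these is exploited there. If a direct citation is not enough, the alternative is to pick (via \eqref{4eq5}) some $v\in LSC(\Omega)$ with $v>0$ and $F(D^2v)+(\mu_0+1)v\leq 0$, and then run a comparison between $u$ and $\varepsilon v$ through the ratio $u/v$ at a maximum point, exploiting the $1$-homogeneity of $F$ to deduce the required contradiction.
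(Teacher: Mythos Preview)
Your proposal is correct and follows exactly the approach the paper intends: the proposition is stated without a separate proof, as its argument is precisely the one spelled out in the paragraph preceding it for $F\equiv\Pmk$ (absorb $a(x)u^p$ into a linear zero order term via $\|au^{p-1}\|_{L^\infty}$, then apply the maximum principle guaranteed by \eqref{4eq5}). Your self-flagged ``obstacle'' about transferring \cite[Theorem~4.1]{BGI} to general $F$ is a fair technical caveat, but not a gap---as you note, only ellipticity, continuity and $1$-homogeneity are used there, and your alternative direct comparison with $\varepsilon v$ would also close it.
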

\begin{remark}\label{remark end}
\rm
Conditions \eqref{4eq4}-\eqref{4eq5} are satisfied, for instance, by the convex combinations of Example 1 in the case $\alpha_N=0$, or by the partial $k$-sums of Example 2 provided $j_k\leq N-1$. For \eqref{4eq5} it is sufficient to consider  $v(x)=(R^2-|x|^2)^\gamma$, $\gamma>1$, as in the case of $\Pmk$, see \cite[Proposition 4.3]{BGI}. 
\end{remark}
Finally we point out that the proof of Theorem \ref{theoremP1} strongly depends on the precise structure of $\Ppo$, in particular on the compactness of the map $\Phi$ defined in \eqref{DPT}. The lack of regularity results for $\Ppk$, $k>1$, seems to be a real obstruction in the existence issue of nontrivial solutions,  a fortiori for more general nonlinearities $F$ satisfying (H1) and which are not covered by Proposition \ref{4prop1}.   
	
	\bigskip
	
	\noindent
	\textbf{Acknowledgments.} The author is deeply indebted to  Isabeau Birindelli for her helpful advice and for the interesting discussions they had together.\\
	The author was partially supported by GNAMPA-INDAM.

\end{document}